\documentclass[amssymb,amsfonts,refcheck,12pt,verbatim,righttag]{amsart}

\usepackage{color}
\usepackage{amssymb}
\usepackage{graphics}
\usepackage{tikz}
\usepackage{multicol}
\usepackage{url}
\usepackage{subfigure}
\usepackage{bm}

 \numberwithin{equation}{section} 
\newcommand{\beq}{\begin{equation}}
\newcommand{\eeq}{\end{equation}}

\newcommand{\ben}{\begin{eqnarray}}
\newcommand{\een}{\end{eqnarray}}

\newcommand{\bet}{\begin{eqnarray*}}
\newcommand{\eet}{\end{eqnarray*}}

\DeclareMathOperator{\ord}{ord}

\newtheorem{thm}{Theorem}[section]
\newtheorem{lem}[thm]{Lemma}

\newtheorem{cor}[thm]{Corollary}
\newtheorem{conj}[thm]{Conjecture}

\newtheorem{specialcon}{Conjecture}
\newtheorem{rem}[thm]{Remark}

\usepackage[colorlinks,citecolor=red,hypertexnames=false]{hyperref}

\newcommand{\R}{\mathbb{R}}

\newcommand{\N}{\mathbb{N}}
\newcommand{\Z}{\mathbb{Z}}

\theoremstyle{plain}

\makeatletter

\newcommand{\Rmnum}[1]{\expandafter\@slowromancap\romannumeral #1@}
\makeatother
\begin{document}
\baselineskip 16pt

\title{Metric results for dyadic approximation on the middle-third Cantor set}

\author{Xin-Rong Dai}
\author{Bing Li} 
\author{Bo Wang}
\author{Yu-Feng Wu}

\address[]{School of Mathematics, Sun Yat-sen University, Cuangzhou, 510275, P. R. China}
\email{daixr@mail.sysu.edu.cn}

\address[]{School of Mathematics, South China University of Technology, Guangzhou,
	510641, P. R. China}
\email{scbingli@scut.edu.cn}

\address[]{School of Mathematics, Sun Yat-sen University, Cuangzhou, 510275, P. R. China \\ School of Mathematics, Jiaying University, Meizhou, 514015, P. R. China}
\email{math\_bocomeon@163.com}

\address[]{School of Mathematics and Statistics\\HNP-LAMA\\Central South University\\ Changsha, 410083, P. R. China}
\email{yufengwu.wu@csu.edu.cn}

\keywords{Diophantine approximation, middle-third Cantor set}
\thanks{2010 {\it Mathematics Subject Classification}: 11J83, 11K60, 28A80}

\begin{abstract}
Let $C$ be the middle-third Cantor set and $\mu$ be the Cantor-Lebesgue measure on $C$. A conjecture of Velani states that
\[\mu(W_2(\tau))=\begin{cases}
	0 \quad  \text{ if }\tau>1,\\
	1 \quad \text{ if }0<\tau\leq 1,
\end{cases}\]
where 
\[W_2(\tau)=\left\{x\in[0,1]: \|2^nx\|<n^{-\tau}\text{ for infinitely many }n\in \N\right\}.\]
We prove that the conjecture holds for  $\tau>\frac{1}{\gamma}-\frac{1-\gamma}{3-\gamma}\,(\approx 1.429)$
 and $0<\tau<\frac{\gamma}{12}\,(\approx 0.052)$, where $\gamma=\frac{\log2}{\log3}$ is the Hausdorff dimension of $C$. This improves the known results on both the null part ($\tau>\frac{1}{\gamma}-\frac{0.078(1-\gamma)}{\gamma(2-\gamma)}\approx 1.552$, due to Allen, Baker, Chow, and Yu~\cite{ABCY23A}) and the full measure part ($0<\tau\leq 0.01$, due to Baker~\cite{Baker25A}). Our key innovation is to establish the estimate
\[\sum_{n=1}^{N}|\widehat{\mu}(h2^n)|^2\ll N^{1-\gamma}\]
and its consequences: 
\[  \sum_{n=1}^{N}|\widehat{\mu}(h2^n)|\ll N^{1-\frac{\gamma}{2}},\quad  \sum_{n=1}^{N}n^{- \sigma}|\widehat{\mu}(h2^n)|\ll_{\sigma} N^{1-\frac{\gamma}{2}-\sigma},\]
where $0<\sigma<1-\frac{\gamma}{2}$, and all estimates are uniform in $h\in\Z\setminus\{0\}$. For the  full measure part, our approach also generalizes to self-similar measures on a class of missing-digit sets.
\end{abstract}

\maketitle

\section{Introduction}

 The middle-third Cantor set $C$ consists of all real numbers in $[0,1]$ whose ternary expansions  have only  digits $0$ and $2$. That is,
\begin{equation*}
	C:=\left\{\sum_{j=1}^{\infty}\frac{a_j}{3^j}: a_j\in \{0,2\}, j\geq 1\right\}.
\end{equation*}
In 1984,  Mahler \cite{Mahler84} proposed to study the problem of how well  elements in $C$ can  be
approximated by rational numbers in $C$ (intrinsic approximation) and by rational numbers outside  $C$ (extrinsic approximation). Mahler's problem pioneered the study of Diophantine approximation on fractals, leading to a substantial amount of research; see, for instance, \cite{LSV07O,BD16M,Schleischitz21,TWW24M,Baker25A,CVY24C,CU25S,BHZ24K} and references therein. We do not attempt to give an exhaustive review. Instead, we highlight the results most directly relevant to our work.

In~\cite{LSV07O} Levesley, Salp and Velani studied approximation on $C$ by triadic rationals; that is, rationals whose  denominators  are powers of $3$.  More precisely, let $\mu$ be the  Cantor-Lebesgue measure on $C$. Equivalently, $\mu$ is exactly the $\gamma$-dimensional Hausdorff measure restricted to $C$, where 
\[\gamma:=\frac{\log2}{\log3}\]
is the Hausdorff dimension of $C$. Given $\psi: \N\to [0,\infty)$, set 
\[W_3(\psi):=\left\{x\in [0,1]: \|3^nx\|<\psi(n)\text{ for infinitely many   }n\in\N\right\}.\]
Here and throughout,  $\|\cdot\|$ denotes  the Euclidean distance to the nearest integer.   Levesley, Salp and Velani~\cite{LSV07O} proved a Khintchine-type result for $\mu(W_3(\psi))$ stating that $\mu(W_3(\psi))$ is either null or full according to the convergence or divergence of a certain series.  This result was generalized by Li, Li and Wu~\cite{LLW25Z} to the case of multiplicatively dependent bases.

For approximation on $C$ with dyadic rationals, the situation is substantially different and much more difficult. Given $\psi:\N\to [0,\infty)$ let 
\begin{equation}\label{eqW2psi}
W_2(\psi):=\left\{x\in [0,1]: \|2^nx\|< \psi(n)\text{ for infinitely many   }n\in\N\right\}.
\end{equation}
The following conjecture was attributed to Velani in  \cite{ACY24}. 
\begin{conj}
\label{ConjVelani}
Let $\psi:\N\to [0,\infty)$ be monotonic. Then 
\[\mu(W_2(\psi))=\begin{cases}
	0 \quad  \text{ if }\sum_{n=1}^{\infty}\psi(n)<\infty,\\
	1 \quad \text{ if }\sum_{n=1}^{\infty}\psi(n)=\infty.
\end{cases}\]
\end{conj}
For approximation functions of the form $\psi_{\tau}(n)=n^{-\tau}$ ($\tau> 0$), write $W_2(\psi_{\tau})$ as $W_2(\tau)$ for short. In this case,  Velani's conjecture reduces to the following. 
\begin{specialcon}
\label{ConjVelanitau}
We have \[\mu(W_2(\tau))=\begin{cases}
	0 \quad  \text{ if }\tau>1,\\
	1 \quad \text{ if }0<\tau\leq 1.
\end{cases}\]
\end{specialcon}

Although there have been several deep results, Conjectures~\ref{ConjVelani} and \ref{ConjVelanitau} are widely open. Allen, Chow, and Yu \cite{ACY24} made the first progress in both the convergence and divergence parts of Conjecture \ref{ConjVelani}.  They proved that $\mu(W_{2}(\psi))=0$ if 
$$\sum_{n=1}^{\infty}\left(2^{-\log n/\log \log n \cdot \log \log \log n}\psi(n)^{\log 2/\log 3}+\psi(n)\right)<\infty,$$
and $\mu(W_{2}(\psi))=1$ if $\psi(n)=2^{-\log \log n/\log \log \log n}$; see \cite[Theorems 1.5, 1.9]{ACY24}.
A key ingredient in \cite{ACY24} is to relate   Velani's conjecture to the principle that the base-$2$ and base-$3$ expansions of a number are not both structured. However, the current known results supporting this principle are much weaker than conjectured, which prevents the method of \cite{ACY24} from fully resolving Velani's conjecture; see \cite[Section 5]{ACY24} for some conditional results. This indicates the difficulty of the conjecture. 

The above principle  falls within the broader context of  Furstenberg's $\times 2, \times 3$ principle from dynamical systems. Broadly speaking, Furstenberg's $\times 2, \times 3$ principle describes the independence of the maps $T_2: x\mapsto 2x\bmod1$ and $T_3: x\mapsto 3x\bmod1$ on the unit circle  at various levels. This principle has given rise to  many open problems and deep results, see, for instance \cite{Furstenberg67,Host95,HS15E,Shmerkin19,Wu19A,CLW26}.  We refer the reader to \cite[Section 1]{Baker25A} for a more detailed discussion of related  results.

The connection between Furstenberg's $\times 2, \times 3$ principle and Velani's conjecture  was also emphasized by Baker~\cite{Baker25A}.
To  study  this principle in the context of shrinking targets problems, 
Baker~\cite{Baker25A} proposed a  conjecture which generalizes Velani's conjecture to the  broader framework of shrinking targets, without assuming monotonicity of $\psi$. More precisely,
given a sequence  $\mathbf{x}=(x_n)_{n=1}^{\infty}$ in $[0,1]$ and a function $\psi:\N\to[0,\infty)$, set
\[W_2(\psi,\mathbf{x}):=\left\{x\in[0,1]: \|2^nx-x_n\|<\psi(n)\text{ for infinitely many }n\in \N\right\}.\]
Note that when $x_n=0$ for all $n$, $W_2(\psi,\mathbf{x})$ coincides with $W_2(\psi)$ defined in \eqref{eqW2psi}.
Baker conjectured that 
\begin{equation}\label{BakerConj}
\mu(W_2(\psi,\mathbf{x}))=\begin{cases}
	0 \quad  \text{ if }\sum_{n=1}^{\infty}\psi(n)<\infty,\\
	1 \quad \text{ if }\sum_{n=1}^{\infty}\psi(n)=\infty.
\end{cases}
\end{equation}
Regarding this conjecture, Baker~\cite{Baker25A} confirmed the divergent part for approximation functions $\psi_{\tau}(n)=n^{-\tau}$ with $0<\tau\leq 0.01$. Indeed, he proved a more general counting result stating that for $\mu$-a.e. $x\in C$, 
\begin{equation}\label{BakerMainthm}
\lim_{N\to\infty}\frac{\#\left\{1\leq n\leq  N:\|2^nx-x_n\|<n^{-0.01}\right\}}{2\sum_{n=1}^{N}n^{-0.01}}=1.
\end{equation}
In particular, this implies that
\begin{equation}\label{eqBaker001}
\mu(W_2(\tau))=1, \qquad \forall 0<\tau\leq 0.01,
\end{equation}
which represents a step towards the full measure part of Velani's Conjecture~\ref{ConjVelanitau}.

By applying a key lemma from \cite{Baker25A} (cf. Lemma~\ref{Bakerlemma})  and some other techniques,  
Allen, Baker, Chow, and Yu \cite{ABCY23A} made further progress on the null part of Conjecture~\ref{ConjVelanitau}. They considered the more general inhomogeneous setting. More precisely, let $y\in \R$ be fixed. 
For $\tau>0$, denote $\psi_{\tau}(n)=n^{-\tau}$ and
\begin{equation}\label{eqW2tauy}
 W_{2}(\psi_{\tau},y):=\left\{x\in[0,1]: \|2^{n}x-y\|<n^{-\tau}\text{ for infinitely many } n\in\mathbb{N}\right\}. 
\end{equation}
Allen, Baker, Chow, and Yu~\cite{ABCY23A} proved that 
\begin{equation}\label{ThmABCYCon}
\mu(W_2(\psi_{\tau}, y)) = 0, \qquad  \forall \tau>\frac{1}{\gamma}-\frac{0.078(1-\gamma)}{\gamma(2-\gamma)}.
\end{equation}

In this paper, we  further explore Velani's Conjecture~\ref{ConjVelanitau}.  Our first main result is the following theorem, which improves the known results on both the null and  full measure parts on Conjecture~\ref{ConjVelanitau}. 

\begin{thm}\label{MainThm0}
We have 
\[\mu(W_2(\tau))=\begin{cases}
	0 \quad  \text{ if }\tau>\frac{1}{\gamma}-\frac{1-\gamma}{3-\gamma},\\
	1 \quad \text{ if }0<\tau<\frac{\gamma}{12}.
\end{cases}\]
\end{thm}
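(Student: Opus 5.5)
The plan is to reduce both parts of Theorem~\ref{MainThm0} to Fourier analysis of $\mu$ along the orbit $h2^n$, where the key input is the mean-square estimate announced in the abstract,
\[
\sum_{n=1}^{N}|\widehat{\mu}(h2^n)|^2\ll N^{1-\gamma}\qquad\text{uniformly in } h\in\Z\setminus\{0\},
\]
together with its Cauchy--Schwarz and partial-summation consequences $\sum_{n\le N}|\widehat\mu(h2^n)|\ll N^{1-\gamma/2}$ and $\sum_{n\le N}n^{-\sigma}|\widehat\mu(h2^n)|\ll N^{1-\gamma/2-\sigma}$. I would prove this first. Write $\widehat\mu(\xi)=\prod_{j\ge1}\cos(2\pi\xi 3^{-j})$ up to a unimodular factor. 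The factor $|\cos(\pi x)|$ is bounded away from $1$ whenever the relevant ternary digit of $x$ is $1$, so $|\widehat\mu(h2^n)|\le c^{D(h2^n)}$, where $D(\cdot)$ counts such ``nonzero'' ternary digit events. Expanding the square and summing over $n$ should turn this into a count of how many $n\le N$ have $h2^n$ with few nonzero ternary digits in a window. I would organize this by splitting the ternary scale into blocks of length comparable to $\log N$ and using the transfer-operator structure of multiplication by $2$ on $\Z/3^k$. The $N^{1-\gamma}$ saving corresponds exactly to the dimension count. This is the step I expect to be hardest.

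\textbf{Null part.} I would follow the scheme of Allen--Baker--Chow--Yu~\cite{ABCY23A}. By Borel--Cantelli it suffices to show $\sum_n \mu(A_n)<\infty$, where $A_n=\{x:\|2^nx\|<n^{-\tau}\}$. Smooth the indicator with a bump $\phi$ at scale $\delta=n^{-\tau}$. Fourier expansion then gives
\[
\mu(A_n)\ll \delta+\delta\sum_{0<|h|\le \delta^{-1+\epsilon}}|\widehat\mu(h2^n)|,
\]
but termwise this is too weak. Instead I would group $n$ dyadically, $n\sim N$. In one regime I would use Baker's lemma (Lemma~\ref{Bakerlemma}) to control the measure of the union over a block directly via covering by triadic intervals at scale $\delta^{\gamma}$-type mass; this is where $1/\gamma$ enters. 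In the other regime I would use the $\ell^1$ bound above, summed over $h$ and $n\sim N$. Optimizing the block length between the two regimes should produce the threshold $\tau>\frac1\gamma-\frac{1-\gamma}{3-\gamma}$. The denominator $3-\gamma$ is consistent with balancing an $N^{1-\gamma/2}$-type saving against a $\gamma$-dimensional covering loss.

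\textbf{Full measure part.} I would use a second-moment (quasi-independence / Chung--Erdős or Duffin--Schaeffer-type) argument. Set $\psi(n)=n^{-\tau}$ and $S_N(x)=\sum_{n\le N}\phi_n(2^nx)$ with smoothed indicators. Compute
\[
\int S_N\,d\mu=2\sum_{n\le N}n^{-\tau}+E_1,
\]
where $E_1$ is controlled by $\sum_n n^{-\tau}\sum_{h\ne0}|\widehat\phi_n(h)||\widehat\mu(h2^n)|$, handled by the weighted bound with $\sigma=\tau$. Likewise $\int S_N^2\,d\mu=(\int S_N)^2+E_2$, where $E_2$ involves $\widehat\mu(h2^m-k2^n)$. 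Factoring out powers of $2$ reduces this to orbits $h'2^{n}$ with $h'=h2^{m-n}-k$, and the uniformity in $h$ of the mean-square bound is what makes this work. Requiring $E_1,E_2=o((\sum n^{-\tau})^2)$ with $\sum n^{-\tau}\asymp N^{1-\tau}$ and the frequency cutoff $|h|\le n^{\tau(1+\epsilon)}$ leads, after bookkeeping, to a condition of the form $12\tau<\gamma$. A standard variance/Borel--Cantelli argument along a lacunary subsequence, as in~\cite{Baker25A}, then gives $\mu(W_2(\tau))=1$ (indeed a counting asymptotic).
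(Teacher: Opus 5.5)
\textbf{Gaps in the proposal.} Your full-measure outline follows the paper's route, up to bookkeeping. That route is Baker's first- and second-moment scheme, with the weighted bound applied at $\sigma=\tau$ and $\sigma=2\tau$ after writing $\ell2^n+j2^m=2^n(\ell+j2^{m-n})$. Two bookkeeping points remain. First, the cutoff must be of the form $N^{\rho}$ with $\rho=\tau+\gamma/6$, not $n^{\tau(1+\epsilon)}$: with the latter, Jackson's tail is about $N^{1-2\epsilon\tau}$, which exceeds the main term $N^{1-\tau}$ unless $\epsilon>\frac12$, and the choice of $\rho$ is exactly what produces $\gamma/12$. Second, the terms with $\ell+j2^k=0$ must be counted separately. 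The real gaps are elsewhere.

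\textbf{The mean-square bound.} Your route cannot reach the exponent $1-\gamma$, because a bound $|\widehat\mu(h2^n)|\le c^{D(h2^n)}$ loses too much. Take the optimal constant ($\cos^2(\pi x)\le\frac14$ when $x \bmod 1\in[\frac13,\frac23]$) and perfectly equidistributed digits. The average of $4^{-D}$ over $r$ digits is then $(3/4)^r$, so one full period gives only $\sum|\widehat\mu(h2^n)|^2\ll(9/4)^r=N^{2-2\gamma}$, roughly $N^{0.74}$ instead of $N^{0.37}$. Fed through the rest of the argument, this shrinks the full-measure range to $\tau<(2\gamma-1)/12$. What is missing are two exact facts.
\begin{enumerate}
\item By lifting the exponent, $\nu_3(4^n-1)=1+\nu_3(n)$. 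Hence, after writing $h=3^mu$ with $\gcd(u,3)=1$ (the first $m$ factors equal $1$), the set $\{u4^n \bmod 3^{r+1}:0\le n<3^r\}$ is exactly the class $\{y\equiv u \bmod 3\}$.
\item Expand $\cos^2(\pi x)=\frac12+\frac14e(x)+\frac14e(-x)$. The $q=1$ factor is identically $\frac14$ on that class. Every nontrivial frequency $\sum_{q=2}^{r+1}\varepsilon_q3^{r+1-q}$ has absolute value less than $3^r$ and averages to zero. So the average of $\prod_{q\le r+1}\cos^2(\pi y/3^q)$ is exactly $2^{-r-2}$.
\end{enumerate}
Then drop the factors with $q>r+1$ and cover any window of length $N$ by one period $3^{R+1}\le 3N$, replacing $h$ by $h4^a$ (this is where uniformity in $h$ is used). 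Splitting $n$ by parity gives $N^{1-\gamma}$.

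\textbf{The null part.} Your sketch lacks the mechanism that produces $3-\gamma$; write $A_n(\delta)=\{x:\|2^nx\|<\delta\}$. Smoothing at the target scale $n^{-\tau}$ with $\tau>1$ is hopeless, since the Fourier side is only bounded by $N^{1-\gamma/2}$ per dyadic block. Baker's lemma is precisely what gave the $\approx1.552$ threshold of \cite{ABCY23A}. The paper proceeds as follows.
\begin{enumerate}
\item It works at an intermediate scale $\delta_n=n^{-\varrho}$ with $\varrho<\tau$. Call $n\in[N,2N]$ good if $\sum_{1\le|h|\le 2/\delta_{2N}}|\widehat\mu(h2^n)|\le1$.
\item For good $n$, Yu's estimate \cite{Yu21R} gives $\mu(A_n(\delta_n))\ll\delta_n$. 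The scale-transfer inequality $\mu(A_n(\sigma_n))\ll(\sigma_n/\delta_n)^{\gamma}\mu(A_n(\delta_n))$ from \cite{ABCY23A} then brings this down to $\sigma_n=n^{-\tau}$.
\item Bad $n$ get only the regularity bound $\sigma_n^{\gamma}$. Their number is controlled by Cauchy--Schwarz against the $L^2$ estimate, giving $\#B_N\ll N^{2\varrho+1-\gamma}$.
\item Balancing $1-\gamma(\tau-\varrho)-\varrho<0$ against $2\varrho+1-\gamma-\gamma\tau<0$ gives $\varrho=\gamma/(3-\gamma)$ and the stated threshold.
\end{enumerate}
The denominator $3-\gamma$ comes specifically from the $L^2$ bound, not from the $\ell^1$ bound as you suggest. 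Counting bad $n$ via $\ell^1$ gives $\#B_N\ll N^{\varrho+1-\gamma/2}$. The same optimization then yields only $\tau>\frac1\gamma-\frac{1-\gamma}{2(2-\gamma)}\approx1.450$, short of the theorem.
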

\begin{rem}
Note that \[\frac{\gamma}{12}\approx 0.052, \quad \frac{1}{\gamma}-\frac{1-\gamma}{3-\gamma}\approx 1.429 \quad \text{and} \quad \frac{1}{\gamma}-\frac{0.078(1-\gamma)}{\gamma(2-\gamma)} \approx1.552.\]
Hence Theorem~\ref{MainThm0} improves both Baker's result \eqref{eqBaker001} and the result \eqref{ThmABCYCon} due to Allen, Baker, Chow and Yu. 
\end{rem}

For the full measure part of Theorem~\ref{MainThm0}, similar to \cite{Baker25A}, we indeed prove the following more general counting result, which generalizes Baker's result \eqref{BakerMainthm}.

\begin{thm}\label{Mainthm}
Let $(x_{n})_{n=1}^{\infty}$ be a sequence of real numbers in $[0,1]$. Let $0<\tau<\frac{\gamma}{12}$. Then for $\mu$-a.e. $x\in C$, we have  $$\lim_{N\to\infty}\frac{\#\left\{1\leq n\leq  N:\|2^nx-x_n\|<n^{-\tau}\right\}}{2\sum_{n=1}^{N}n^{-\tau}}=1.$$ 
\end{thm}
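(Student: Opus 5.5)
We will prove Theorem~\ref{Mainthm} with the standard quantitative Borel--Cantelli / variance method, in the form of the counting lemma of Harman: if $f_n\ge 0$ are measurable with $0\le f_n\le 1$, $\phi_n\ge 0$, $\Phi(N)=\sum_{n\le N}\phi_n\to\infty$, and
\[
\int\Big(\sum_{M<n\le N}(f_n-\phi_n)\Big)^2\,d\mu\ll \sum_{M<n\le N}\phi_n+E(M,N)
\]
with a suitable error $E$, then $\sum_{n\le N}f_n(x)=\Phi(N)+o(\Phi(N))$ for $\mu$-a.e. $x$.

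\textbf{Setup.} Replace the indicator of $\{x:\|2^nx-x_n\|<n^{-\tau}\}$ by smooth $1$-periodic minorants and majorants $g_n^{\pm}$. Their Fourier coefficients satisfy $\widehat{g^\pm_n}(0)=2n^{-\tau}(1\pm n^{-\delta})$, and they decay rapidly beyond frequency $|h|\approx n^{\tau+\delta}$. Set $f_n(x)=g_n(2^nx)$. Then
\[
\int f_n\,d\mu=\widehat{g_n}(0)+\sum_{h\ne0}\widehat{g_n}(h)\,\widehat{\mu}(-h2^n).
\]
Summing over $n\le N$, the error term is bounded by
\[
\sum_{0<|h|\le N^{\tau+\delta}}\ \sum_{n\le N} n^{-\tau}\,|\widehat\mu(h2^n)|.
\]
The abstract's estimate $\sum_{n\le N}n^{-\sigma}|\widehat\mu(h2^n)|\ll N^{1-\gamma/2-\sigma}$, which we take as established, bounds the inner sum uniformly in $h$. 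The total error is then $N^{\tau+\delta}\cdot N^{1-\gamma/2-\tau}=o(N^{1-\tau})$ once $\tau+\delta<\gamma/2$. So the first moment matches $2\sum n^{-\tau}$.

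\textbf{Second moment.} Expand $\int f_mf_n\,d\mu$ for $m<n$:
\[
\int f_mf_n\,d\mu=\sum_{h,k}\widehat{g_m}(h)\,\widehat{g_n}(k)\,\widehat\mu\bigl(-(h2^m+k2^n)\bigr).
\]
\begin{itemize}
\item The term $h=k=0$ gives the main product $\widehat{g_m}(0)\widehat{g_n}(0)$.
\item Terms with $h=0$ or $k=0$ are handled as in the first moment.
\item Cross terms with $hk\ne0$ are the main obstacle. Write $h2^m+k2^n=2^m(h+k2^{n-m})$. The key tool here is Baker's lemma (Lemma~\ref{Bakerlemma}), which controls how often $|\widehat\mu(2^m\ell)|$ can be large as $\ell$ ranges over such combinations. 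Combining it with the $L^2$ bound $\sum_{n\le N}|\widehat\mu(h2^n)|^2\ll N^{1-\gamma}$ (applied via Cauchy--Schwarz in $m$ for fixed $\ell=h+k2^{n-m}$), the cross-term contribution should be at most $N^{2(\tau+\delta)}\cdot N^{2-\gamma/2-\ldots}$-type quantities.
\end{itemize}

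\textbf{Bookkeeping and conclusion.} We will separate the cases where $n-m$ is small (say $\le C\log N$) from those where it is large, since $h+k2^{n-m}\neq0$ then has controlled size, and count frequencies dyadically. We then require the total cross-term error to be $O(\Phi(N)^{2-\epsilon})$ with $\Phi(N)\asymp N^{1-\tau}$. Optimizing the smoothing parameter $\delta$ and the frequency cutoffs should produce the exact threshold $\tau<\gamma/12$: roughly, the error exponent takes the form $2-\gamma/2+c\tau$ against $2-2\tau$, and losses from the two frequency sums and the smoothing account for the factor $12$.

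Finally, apply Harman's lemma to the minorant and majorant families and let $\delta\to0$ to obtain the limit in Theorem~\ref{Mainthm}. The hardest step is the cross-term estimate: it is the only place where the $\times2$ structure interacts with two different frequencies, and it determines the exponent. If the estimate falls short there, our first attempt would be to sharpen the application of Lemma~\ref{Bakerlemma} by inserting the $L^2$ bound in place of pointwise bounds.
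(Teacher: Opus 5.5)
Your architecture is the paper's (Sections~\ref{S3}--\ref{Sec:Mainthm}). It uses smooth periodic approximants, a first moment controlled by Lemma~\ref{lem:weightedL1}, and a variance bound obtained by writing $h2^m+k2^n=2^m(h+k2^{n-m})$ and separating the cases $h+k2^{n-m}=0$, followed by Borel--Cantelli. Your first-moment computation is correct.

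\textbf{The gap.} The variance estimate is the substance of the proof, and you never carry it out. You say the cross terms ``should be'' of a certain ``type'', name Baker's Lemma~\ref{Bakerlemma} as the key tool, and assert that optimisation yields exactly $\gamma/12$. Baker's lemma plays no role here. It follows from the $L^2$ bound by Chebyshev, and as a pointwise Fourier input it is what limited Baker to $\tau\le 0.01$.

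\textbf{How to close it.} The ingredient you mention only in passing suffices. Fix $d=n-m\ge 1$ and $0<|h|,|k|\le L$ with $\ell=h+k2^d\neq 0$. Bound $|\widehat{g_m}(h)\widehat{g_n}(k)|\ll m^{-\tau}n^{-\tau}\le m^{-2\tau}$. Then apply Lemma~\ref{lem:weightedL1} with $\sigma=2\tau$, which is admissible since $2\tau<1-\frac{\gamma}{2}$; the bound is uniform in $\ell$:
\[
\sum_{m\le N}m^{-2\tau}\,|\widehat\mu(\ell 2^m)|\ll N^{1-\frac{\gamma}{2}-2\tau}.
\]
Summing over $d\le N$ and over the $\ll L^2$ pairs $(h,k)$ gives $O(NL^2N^{1-\frac{\gamma}{2}-2\tau})$. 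The case $\ell=0$ forces $2^d\le L$, so there are only $O(L)$ such triples $(d,h,k)$, and they contribute trivially $O(LN^{1-2\tau})$. This is exactly the split into $F$ and $G$ in the proof of Lemma~\ref{Lemma2}.

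\textbf{Where the $12$ comes from.} Your account of the threshold is wrong, and you should not expect to recover $\gamma/12$ from your setup. In the paper the $f_n$ are only $C^2$. Jackson's inequality then leaves a truncation error $\ll N^{1+2\tau}\log N/L^2=N^{1-2\delta}\log N$ for $L=N^{\tau+\delta}$. Multiplied by the trivial bound $N^{1-\tau+\rho}=N^{1+\delta}$ on the truncated sum, this costs $N^{2-\delta}\log N$ in the variance. Balancing against $N^{2-\frac{\gamma}{2}+2\delta}$ gives $\delta=\frac{\gamma}{6}$, a variance $O(N^{2-\frac{\gamma}{6}}\log N)$, and the condition $2-\frac{\gamma}{6}<2(1-\tau)$, i.e.\ $\tau<\frac{\gamma}{12}$.

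With the rapidly decaying $C^\infty$ bumps you propose, truncating at $L=N^{\tau+\delta+\eta}$ costs only $O(N^{-A})$. The variance is then $O(N^{2-\frac{\gamma}{2}+2\delta+2\eta})$, which is below $\Phi(N)^2\asymp N^{2-2\tau}$ by a power whenever $\tau+\delta+\eta<\frac{\gamma}{4}$. So your route, once completed, covers every $\tau<\frac{\gamma}{12}$ with room to spare.

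\textbf{The final step.} Harman's lemma in its standard form needs a variance bound $\ll\sum_{M<n\le N}\phi_n$ on every block, which these estimates do not give. Instead, do as the paper does: apply Chebyshev at the times $j^Q$ with deviation threshold $j^{s}$, where $\frac{Q\theta+1}{2}<s<Q(1-\tau)$, then use Borel--Cantelli and monotone interpolation between $j^Q$ and $(j+1)^Q$. This needs only
\[
\int\Bigl(S_N-\int S_N\,d\mu\Bigr)^2\,d\mu\ll N^{\theta},\qquad \theta<2(1-\tau),
\]
where $S_N(x)=\sum_{n\le N}f_n(2^nx)$.
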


Let $W_{2}(\psi_{\tau},y)$ be defined as in \eqref{eqW2tauy}. The null part of Theorem~\ref{MainThm0} follows from the following. 

\begin{thm}\label{thmconverge}
Let \(\tau>\frac{1}{\gamma}-\frac{1-\gamma}{3-\gamma}\). Then
	\(\mu (W_{2}(\psi_{\tau},y)) = 0\).
\end{thm}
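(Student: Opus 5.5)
The plan is to apply the convergence Borel--Cantelli lemma to the sets
\[A_n:=\{x\in[0,1]:\|2^nx-y\|<n^{-\tau}\},\]
and to show $\sum_n\mu(A_n)<\infty$. It is enough to bound $S_N:=\sum_{N\le n<2N}\mu(A_n)$ by $N^{-\eta}$ for some $\eta>0$ and then sum over dyadic $N$. In the block we take $\delta:=N^{-\tau}$, so that each $A_n$ is contained in $\{x:\|2^nx-y\|<\delta\}$. I do not expect either Frostman-type trivial estimates or Fourier analysis alone to suffice, so the approach is to interpolate between them through an auxiliary scale $\varepsilon=N^{-s}$ with $\delta\le\varepsilon$.

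\emph{Step 1 (Fourier estimate at scale $\varepsilon$).} Let $\phi_\varepsilon$ be a smooth $1$-periodic bump that is $\ge 1$ on $(y-\varepsilon,y+\varepsilon)$, is supported on a $2\varepsilon$-neighbourhood, and satisfies $|\widehat{\phi_\varepsilon}(h)|\ll\varepsilon(1+\varepsilon|h|)^{-2}$. Then
\[\sum_{N\le n<2N}\mu\bigl(\|2^nx-y\|<\varepsilon\bigr)\le\sum_{n}\int\phi_\varepsilon(2^nx)\,d\mu=\sum_h\widehat{\phi_\varepsilon}(h)\sum_{n}\widehat{\mu}(h2^n).\]
The term $h=0$ contributes $\ll N\varepsilon$. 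For the remaining terms we use the uniform bound $\sum_{n\le N}|\widehat\mu(h2^n)|\ll N^{1-\gamma/2}$ from the abstract, which comes from the $L^2$ estimate via Cauchy--Schwarz; the weighted form may be used in place of dyadic blocks if that is more convenient. This gives the error $\ll N^{1-\gamma/2}\sum_h\varepsilon(1+\varepsilon|h|)^{-2}\ll N^{1-\gamma/2}$. So the $\varepsilon$-thickened count is $\ll N^{1-s}+N^{1-\gamma/2}$.

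\emph{Step 2 (passing from scale $\varepsilon$ to scale $\delta$ using the self-similarity of $\mu$).} Each component of $\{\|2^nx-y\|<\varepsilon\}$ is an interval $J$ of length $\asymp\varepsilon2^{-n}$. The corresponding component of $A_n$ is a subinterval $I\subset J$ of length $\asymp\delta2^{-n}$. We want $\mu(I)\ll(\delta/\varepsilon)^{\gamma}\,\Theta$, with $\Theta$ controlled by the $\varepsilon$-scale mass plus a Frostman term. To this end we cover $C\cap J$ by triadic cylinders of the level $k$ with $3^{-k}\asymp\varepsilon2^{-n}$. On each such cylinder $\mu$ is a rescaled copy of itself, so the mass of the $\delta$-target inside it is at most $\ll(\delta/\varepsilon)^\gamma$ times the mass of the cylinder, by Frostman's bound $\mu(B(z,r))\ll r^\gamma$ after rescaling. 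This is where Lemma~\ref{Bakerlemma} (Baker's key lemma) enters: it should give exactly this kind of control on how the cylinders meeting a dyadic target distribute, up to a bounded overlap factor.

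Summing over components and combining with Step~1 then gives
\[S_N\ll(\delta/\varepsilon)^{\gamma}\bigl(N^{1-s}+N^{1-\gamma/2}\bigr).\]
It also gives the trivial alternative $S_N\ll N\cdot2^{n}(\delta2^{-n})^\gamma$-type bounds, which are used only to cap the loss when $\varepsilon$ is close to $\delta$. Writing $\delta=N^{-\tau}$ and $\varepsilon=N^{-s}$, the right-hand side of the main bound is $N^{-\gamma(\tau-s)}(N^{1-s}+N^{1-\gamma/2})$.

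\emph{Step 3 (optimization).} Optimize over $s$, and possibly also over an additional splitting of the range of $h$: frequencies with $|h|\le\varepsilon^{-1}$ are handled via the $L^2$ estimate $\sum_n|\widehat\mu(h2^n)|^2\ll N^{1-\gamma}$ applied after Cauchy--Schwarz in both $h$ and $n$, rather than the $L^1$ bound term by term. The aim is to find the threshold for $\tau$ at which the exponent becomes negative, and I expect the balance to produce exactly $\tau>\frac1\gamma-\frac{1-\gamma}{3-\gamma}$.

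The main obstacle is Step~2. The Fourier bound only sees scale $\varepsilon$, and passing to the much finer scale $\delta$ with the full gain $(\delta/\varepsilon)^\gamma$ requires that the dyadic targets do not concentrate on unusually heavy triadic cylinders. I would first try to obtain this directly from Lemma~\ref{Bakerlemma}. If that gives only a weaker gain, I would fall back on bounding the number of level-$k$ cylinders meeting $\{\|2^nx-y\|<\varepsilon\}$ by the $\varepsilon$-count from Step~1, divided by $2^{-k}$. In that case the final exponent would have to be rebalanced, and Step~3 is where the specific constant $\frac{1-\gamma}{3-\gamma}$ would have to come out.
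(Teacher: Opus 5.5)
Your argument is correct, but it is organized differently from the paper's, and it actually yields a better exponent.

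\textbf{The paper's route.} The paper fixes $\delta_n=n^{-\varrho}$ and splits $[N,2N]$ into good $n$, where $\sum_{1\le|h|\le 2/\delta_{2N}}|\widehat\mu(h2^n)|\le 1$, and bad $n$. It bounds $\#B_N\ll N^{2\varrho+1-\gamma}$ by Cauchy--Schwarz and \eqref{eq:L2-main}. On good $n$ it uses Yu's estimate \eqref{eqYuEst} together with the scale-change estimate \eqref{eqsigmadeltan}. On bad $n$ it uses only the Frostman bound \eqref{Ahlforsregular}. Balancing $N^{1-\gamma(\tau-\varrho)-\varrho}$ against $N^{2\varrho+1-\gamma-\gamma\tau}$ forces $\varrho=\gamma/(3-\gamma)$.

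\textbf{Your route.} You use no exceptional set. Your Step~1 amounts to summing \eqref{eqYuEst} over all $n$ in the block and applying \eqref{eq:L1-main} with $a=N$. This gives
\[
\sum_{N\le n<2N}\mu(A_n^y(\varepsilon))\ll N\varepsilon+N^{1-\gamma/2}.
\]
You then apply the scale change to every $n$, so you never discard the factor $(\delta/\varepsilon)^\gamma$ that the paper throws away on $B_N$. Your bound $S_N\ll N^{-\gamma(\tau-s)}(N^{1-s}+N^{1-\gamma/2})$ balances at $s=\gamma/2$ (note $s<\tau$). It is summable over dyadic $N$ whenever $\tau>\frac1\gamma-\frac{1-\gamma}{2}\approx 1.400$. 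This is below $\frac1\gamma-\frac{1-\gamma}{3-\gamma}\approx 1.429$, so the theorem follows a fortiori.

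Consequently, the constant $\frac{1-\gamma}{3-\gamma}$ will not come out of your optimization; it is an artifact of the good/bad split. The extra Cauchy--Schwarz over $h$ proposed in Step~3 is unnecessary and gains nothing.

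\textbf{Corrections to Step~2.} Step~2 is not an obstacle, and Lemma~\ref{Bakerlemma} plays no role there. That lemma controls the size of $\widehat\mu(h2^n)$ for most $n$ and carries no geometric information about cylinders.

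The estimate you need, for $\delta\le\varepsilon/2$, is
\[
\mu(A_n^y(\delta))\ll(\delta/\varepsilon)^\gamma\,\mu(A_n^y(\varepsilon)).
\]
This is exactly \eqref{eqsigmadeltan}, and it follows from Ahlfors regularity of $\mu$. If a component $I$ of $A_n^y(\delta)$ meets $C$ at a point $z$, then the corresponding component $J$ of $A_n^y(\varepsilon)$ contains $B(z,\varepsilon 2^{-n-1})$. Hence $\mu(J)\gg(\varepsilon 2^{-n})^\gamma$, while $\mu(I)\ll(\delta 2^{-n})^\gamma$.

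Your cylinder argument proves the same thing, with $\varepsilon$ replaced by a constant multiple. Every level-$k$ cylinder has mass exactly $2^{-k}$, so there are no ``unusually heavy'' cylinders. The disjoint cylinders meeting $I$ lie inside a slight enlargement of $J$. So neither the fallback nor any rebalancing is needed.

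Finally, the ``trivial alternative'' $N\cdot 2^n(\delta 2^{-n})^\gamma$ is wrong as written; the correct trivial bound is $\mu(A_n^y(\delta))\ll\delta^\gamma$. Your argument never needs it.
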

\begin{rem}  
 The proof of Theorem~\ref{thmconverge} still works if $y$ is replaced by a sequence $(x_{n})_{n=1}^{\infty}$ in $[0,1]$.
	That is, we have $\mu(W_{2}(\psi_{\tau},\mathbf{x}))=0$ for any $\tau>\frac{1}{\gamma}-\frac{1-\gamma}{3-\gamma}$. Therefore, Theorems~\ref{Mainthm} and \ref{thmconverge} also make progress towards Baker' conjecture~\eqref{BakerConj}. 
\end{rem}

Finally, we remark that our approach to prove Theorem~\ref{Mainthm} can also be extended to  give a counting result with respect to general  self-similar measures for approximation on some missing-digit sets. More exactly, given an integer $b\geq 3$ and a set  $D\subseteq\{0,1,\ldots,b-1\}$ with cardinality  $2\leq \#D<b$, the {\em missing-digit set} with base $b$ and digit set $D$, denoted by $K_{b,D}$, is defined as
\[K_{b,D}:=\left\{\sum_{j=1}^{\infty}\frac{\varepsilon_j}{b^j}: \varepsilon_j\in D, j\geq 1\right\}.\]
Clearly, $K_{3,\{0,2\}}$ is the middle-third Cantor set $C$. Let $\mathbf{p}=(p_{1},p_{2},\ldots,p_{\#D})$ be a probability vector with strictly positive entries, i.e., $p_{1},p_{2},\ldots,p_{\#D}\in(0,1)$ and $\sum_{i=1}^{\#D}p_{i}=1$. Let $\mu_{\mathbf{p}}$ be the self-similar measure supported on $K_{b,D}$ associated with probability vector $\textbf{p}$. That is, $\mu_{\textbf{p}}$ is the unique Borel probability measure supported on $K_{b,D}$ satisfying 
\[\mu_{\mathbf{p}}=\sum_{j=1}^{\#D}p_j\mu_{\mathbf{p}}\circ\phi_j^{-1},\]
where 
\[\phi_j(x)=\frac{x+k_j}{b}, \quad D=\{k_1,k_2,\ldots, k_{\#D}\}\ {\rm and}\  k_1<k_2<\cdots<k_{\#D}.\]

We have the following extension of Theorem~\ref{Mainthm}.
\begin{thm}\label{thmsselfsimilar}
	Let $(x_{n})_{n=1}^{\infty}$ be a sequence of real numbers in $[0,1]$. Suppose $b\geq3$ is a prime number and $t\geq2$ is an integer satisfying that $b\nmid t$. Let  $$\kappa :=\frac{-\log\left(\min\limits_{1\leq j_{1}<j_{2}\leq\#D}(1-2p_{j_{1}}p_{j_{2}})\right)}{12\log b}.$$ Then for any $\tau\in(0,\kappa)$, we have for $\mu_{\textbf{p}}$-a.e. $x\in K_{b,D}$,
	$$\lim_{N\to\infty}\frac{\#\{1\leq n\leq N:\|t^{n}x-x_{n}\|<n^{-\tau}\}}{2\sum_{n=1}^{N}n^{-\tau}}=1.$$
\end{thm}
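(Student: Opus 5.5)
The plan is to follow the scheme that the paper uses for Theorem~\ref{Mainthm}: a quantitative strong law of large numbers (a variance bound combined with a Borel--Cantelli-type lemma in the spirit of Lemma~\ref{Bakerlemma}) applied to the counting functions $\sum_{n\le N}\mathbf{1}_{E_n}(x)$, where $E_n=\{x:\|t^nx-x_n\|<n^{-\tau}\}$. We replace $\mathbf{1}_{E_n}$ by smooth periodic bump functions $f_n^{\pm}$ with width $n^{-\tau}(1\pm n^{-\epsilon})$, whose Fourier coefficients decay rapidly beyond frequency $\approx n^{\tau+\epsilon}$. Expanding in Fourier series gives
\[\int f_n^{\pm}(t^nx)\,d\mu_{\mathbf p}(x)=2n^{-\tau}+O(n^{-\tau-\epsilon})+\sum_{h\neq0}\widehat{f_n^{\pm}}(h)\,\widehat{\mu_{\mathbf p}}(ht^n).\]
The covariances involve $\widehat{\mu_{\mathbf p}}(h_1t^{n}+h_2t^{m})$. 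The whole argument therefore reduces to a uniform estimate of the form
\[\sum_{n=1}^{N}|\widehat{\mu_{\mathbf p}}(ht^n+k)|\ll N^{1-\delta}\quad\text{or at least}\quad \#\{n\le N:|\widehat{\mu_{\mathbf p}}(ht^n+k)|>N^{-\delta'}\}\ll N^{1-\delta}.\]
This must hold uniformly in $h\neq0$ and $k$ up to polynomial size in $N$, with $\delta$ tied to $\kappa$. The threshold $\tau<\kappa$, with its factor $12$, should come out of the same bookkeeping as in the $\gamma/12$ case: one optimises $\epsilon$, the truncation height of the frequencies, and the decay exponent against each other.

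The Fourier decay input comes from the product formula
\[\widehat{\mu_{\mathbf p}}(\xi)=\prod_{j\ge1}\Phi(\xi b^{-j}),\qquad \Phi(\theta)=\sum_i p_i e^{-2\pi i k_i\theta}.\]
We have $|\Phi(\theta)|^2=1-2\sum_{i<l}p_ip_l(1-\cos 2\pi(k_i-k_l)\theta)$. Whenever the base-$b$ digit of $\xi$ at position $j$ (suitably interpreted, i.e.\ $\|\xi b^{-j}\|$ bounded away from $0$, or the digit being nonzero) is nonzero, the factor satisfies $|\Phi|^2\le \max_{i<l}(1-2p_ip_l)$ up to adjusting constants. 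This is where the quantity $\kappa$, with $\log b$ in the denominator, enters. We then get
\[|\widehat{\mu_{\mathbf p}}(\xi)|\le \rho^{Z(\xi)/2},\qquad \rho:=\min_{j_1<j_2}(1-2p_{j_1}p_{j_2})\ \text{or the analogous extremal value},\]
where $Z(\xi)$ counts nonzero base-$b$ digits in the relevant window. The task thus becomes showing that $ht^n$ (plus a small shift) has many nonzero base-$b$ digits for most $n$.

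This combinatorial digit count is the main obstacle. Here we use the hypotheses that $b$ is prime and $b\nmid t$: $t$ is then a unit modulo $b^L$, and the multiplicative order of $t$ modulo $b^L$ grows like $b^{L-c}$ by the lifting-the-exponent lemma, which needs $b$ prime. So as $n$ ranges over a block of length comparable to that order, $t^n \bmod b^L$ equidistributes in a coset of a large subgroup of $(\Z/b^L\Z)^*$. Multiplying by $h$ (with $b^{v}\|h$, $v$ handled by shifting digits) preserves this. A second-moment or counting argument over $L$-digit blocks then shows that all but a few $n\le N$ have a positive proportion of nonzero digits among the last $\asymp\log N/\log b$ base-$b$ digits of $ht^n$. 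We would try first to mimic the proof of the middle-third estimate $\sum_n|\widehat\mu(h2^n)|^2\ll N^{1-\gamma}$. That is, bound $\sum_{n\le N}|\widehat{\mu_{\mathbf p}}(ht^n)|^2$ by averaging $\prod_{j\le L}|\Phi(ht^nb^{-j})|^2$ over residues $t^n\bmod b^L$ in the subgroup. This average factorises approximately as $\big(\frac1b\sum_{d=0}^{b-1}|\Phi(d/b)|^2\big)^L$, producing the saving $N^{-\log(1/\rho)/\log b}$-type exponent. Feeding this uniform bound into the variance estimate and Lemma~\ref{Bakerlemma}, and optimising $\epsilon$, yields the limit $1$ for every $\tau<\kappa$.
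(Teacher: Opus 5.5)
Your final paragraph is, in substance, the paper's proof. After splitting $n$ modulo $b-1$, the lifting-the-exponent lemma makes $ht^{i}t^{(b-1)n}$, $0\le n<b^r$, run exactly once through an additive coset $u+b^{M}\Z \bmod b^{M+r}$, where $M=\nu_b(h)+\nu_b(t^{b-1}-1)$ (Lemma~\ref{lemodblr}). Averaging $\prod_{q=M+1}^{M+r}|\Phi(yb^{-q})|^2$ over that coset gives $\sum_{n=a}^{a+N-1}|\widehat{\mu_{\mathbf p}}(ht^n)|^2\ll N^{1-\gamma'}$ with $\gamma'=\log(1/\rho)/\log b$ (Lemma~\ref{L2}). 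Cauchy--Schwarz and partial summation then give the weighted $L^1$ bound of Lemma~\ref{Leweightedsum}, which is the only Fourier input to the argument of Theorem~\ref{Mainthm}. The $12$ is the condition $\tau<\gamma'/12$ forced there by the variance exponent $2-\gamma'/6$.

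Your factorisation is in fact exact, and at least as good as the paper's. Since $0<|k_i-k_l|<b$, we have $\frac1b\sum_{d=0}^{b-1}|\Phi(d/b)|^2=\sum_ip_i^2$. Over $r$ levels, every nonzero frequency vector $(\delta_q)$ with $|\delta_q|\le b-1$ averages to zero, so the coset average is exactly $(\sum_ip_i^2)^r$. Because $\sum_ip_i^2=1-2\sum_{i<l}p_ip_l\le\min_{j_1<j_2}(1-2p_{j_1}p_{j_2})$, this implies Lemma~\ref{Keyel}, which first bounds $|\Phi|^2$ pointwise using a single pair of digits. It coincides with that lemma for $\#D=2$. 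For $\#D\ge3$ it gives the larger threshold $\log(1/\sum_ip_i^2)/(12\log b)$.

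The rest of the proposal needs correcting.

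\emph{The pointwise digit-counting route is false.} Take the Cantor measure and $\xi=(3^q-1)/2$. All $q$ ternary digits of $\xi$ equal $1$, and $\|\xi3^{-j}\|=\tfrac12-\tfrac12 3^{-j}\ge\tfrac13$. Yet $|\Phi(\xi3^{-j})|=\cos(\pi3^{-j})$ for $j\le q$, and $|\widehat\mu(\xi)|\ge\bigl(\prod_{i\ge1}\cos(\pi3^{-i})\bigr)^2$ for every $q$. So $|\widehat{\mu_{\mathbf p}}(\xi)|\le\rho^{Z(\xi)/2}$ fails. What matters is $\|(k_i-k_l)\xi b^{-j}\|$, and the constant $\rho$ only emerges after averaging. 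The averaging is therefore the whole argument, not a fallback.

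\emph{Lemma~\ref{Bakerlemma} is misidentified.} It is Baker's Fourier-decay lemma for the Cantor measure along $h2^n$, not a Borel--Cantelli device. It is neither applicable to $\mu_{\mathbf p}$ nor needed: the $L^2$ bound replaces it. The almost-sure statement comes from Markov's inequality along $N=K^Q$, Borel--Cantelli, and sandwiching $N$ between $K^Q$ and $(K+1)^Q$.

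\emph{No additive shifts are needed.} The cross frequencies are $h_1t^n+h_2t^m=t^n(h_1+h_2t^{m-n})$. This has the form $h't^n$ with $h'\neq0$ once $m-n$ exceeds $\log_t$ of the truncation height. A shift $h_2t^m$ would not be of polynomial size anyway.
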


\begin{rem}
Note that when $K_{b,D}$ is the middle-third Cantor set and $\mu_{\textbf{p}}$ is the Cantor-Lebesgue measure, $\kappa$ in Theorem~\ref{thmsselfsimilar} coincides with the threshold $\frac{\gamma}{12}$ in Theorem~\ref{Mainthm}. 
\end{rem}

At the end of this section, we briefly describe the  key ideas  in our proofs of the main results. 
Similar to previous works \cite{ABCY23A,ACY24,Baker25A}, our method is Fourier-analytic. A main obstruction is that $\mu$ has no global Fourier decay (i.e., $|\widehat{\mu}(\xi)|\not\to 0$ as $|\xi|\to\infty$). To get around this, Baker~\cite[Lemma 2.2]{Baker25A} proved that for a fixed non-zero integer $h$, $\widehat{\mu}(h 2^n)$ decays to zero polynomially fast outside a relatively small set of $n$'s; see also   Lemma~\ref{Bakerlemma}. This result underpins the development in~\cite{Baker25A} and is also  crucial in~\cite{ABCY23A}. In this paper, our key new ingredient  is establishing the following estimates 
$$\sum_{n=1}^{N}|\widehat{\mu}(h2^n)|^2\ll N^{1-\gamma}, \quad  \sum_{n=1}^{N}|\widehat{\mu}(h2^n)|\ll N^{1-\frac{\gamma}{2}},\quad \sum_{n=1}^{N}n^{-\sigma}|\widehat{\mu}(h2^n)|\ll_{\sigma} N^{1-\frac{\gamma}{2}-\sigma},$$
which are uniform in $h\in \Z\setminus\{0\}$; see Lemmas~\ref{LemL2-dyadic} and \ref{lem:weightedL1}. These estimates refine Baker's lemma and  are of importance in their own right. Then our general strategy is to take these as the Fourier input in  the mechanism in~\cite{Baker25A} and~\cite{ABCY23A} to establish Theorems~\ref{Mainthm} and~\ref{thmconverge}. Certainly, several other techniques are  also necessary and will be apparent in the course of the proofs. 

Since the proof of Theorem~\ref{thmsselfsimilar} is analogous to that of Theorem~\ref{Mainthm}, we give the full details only for the latter. For Theorem~\ref{thmsselfsimilar}, we shall merely indicate the necessary modifications to the key lemmas.

The paper is organized as follows. In Section~\ref{S2Preliminary}, we establish several estimates on the Fourier transform of $\mu$; the main objects are Lemmas~\ref{LemL2-dyadic} and \ref{lem:weightedL1}. Then in Section~\ref{S3}, we apply these estimates (especially Lemma~\ref{lem:weightedL1}) to derive integral estimates by following the mechanism of  \cite{Baker25A}. We then prove Theorem~\ref{Mainthm} in Section~\ref{Sec:Mainthm}. Section~\ref{sec:thmconverge} is devoted to the proof of Theorem~\ref{thmconverge}. Finally, in the last section, we present necessary modifications to the key lemmas needed to prove Theorem~\ref{thmsselfsimilar}.

{\bf Notation.} For complex-valued functions $f$ and $g$, we write $f\ll g$ or $f=O(g)$ if there exist a constant $M>0$ such $|f|\leq M|g|$ pointwise. Throughout, $\Z$ and $\N$ denote respectively the set of  integers and the set of positive integers. 
 
\section{Preliminary lemmas}\label{S2Preliminary}

Given a Borel probability measure $\nu$ on $\R$, the Fourier transform of $\nu$ is defined as 
\[\widehat\nu(\xi)=\int e^{-2\pi i \xi y}\,\mathrm{d}\nu(y).\]
Recall that $\mu$ is the Cantor-Lebesgue measure on $C$. Equivalently, $\mu$ is  the self-similar measure on $C$ defined by
\[
\mu = \frac12 \mu\circ S_1^{-1} + \frac12 \mu\circ S_2^{-1},
\qquad \text{where }
S_1(x)=\frac{x}{3},\, S_2(x)=\frac{x+2}{3}.
\]
It is well-known that  (see \cite[page 342]{Strichartz1993})
\begin{equation*}\label{eq:fourier-product}
	\widehat\mu(x)
	= \prod_{q=1}^{\infty} \frac{1+e(-2x/3^q)}{2},
	\qquad
	|\widehat\mu(x)|
	= \prod_{q=1}^{\infty}\left|\cos\left(\frac{2\pi x}{3^q}\right)\right|.
\end{equation*}
Here and throughout, for convenience, we write 
\[e(x)=e^{2\pi i x}.\]
For more information on  self-similar measures,  we refer the reader to \cite{Falconer14}. 

In \cite{Baker25A}, Baker proved  Theorem~\ref{BakerMainthm} based on the following result, which is also a key ingredient in \cite{ABCY23A}. 
\begin{lem}\cite[Lemma 2.2]{Baker25A}\label{Bakerlemma}
	Let $N\in\mathbb{N}$ and $h\in\mathbb{Z}\setminus \{0\}$. Then there exist $C_{1},C_{2}>0$ independent of $N$ and $h$ such that 
	$$\# \left\{0\leq n<N:\left|\widehat{\mu}(h 2^n)\right|> C_{1}N^{-0.078}\right\}\leq C_{2}N^{0.922}.$$
\end{lem}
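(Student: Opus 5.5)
Although the paper does not reprove this lemma, the plan is to derive it from an $L^2$-type bound on $\{\widehat{\mu}(h2^n)\}_{0\le n<N}$ combined with Chebyshev's inequality. Write $|\widehat\mu(h2^n)|=\prod_{q\ge1}|\cos(2\pi h2^n/3^q)|$. Each factor is $\le1$, so for any window of $q$'s it suffices to show that many factors are bounded away from $1$. Such a factor equals $|\cos(2\pi\theta)|$ with $\theta=h2^n3^{-q}\bmod 1$, and it is $\le c<1$ unless $\theta$ lies near $0$ or $1/2$, i.e.\ unless the ternary digit of $h2^n$ in position $q$ (together with the next one) is in a ``bad'' configuration. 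Concretely, $|\cos(2\pi x/3^q)|\le\cos(\pi/9)$ whenever the digit of $x$ in position $q-1$ equals $1$, up to a boundary adjustment. So $|\widehat\mu(h2^n)|\le \cos(\pi/9)^{D(n)}$, where $D(n)$ counts the digits equal to $1$ (or nonzero digit pairs) in the ternary expansion of $h2^n$ above the $3$-adic valuation of $h$.

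The key step is to show that, for most $n<N$, $D(n)\gg\log N$. Fix a block of $L\approx c\log N/\log 3$ ternary positions just above $v_3(h)$. The ternary digits of $h2^n$ in positions $v_3(h)+1,\dots,v_3(h)+L$ are determined by $h2^n \bmod 3^{v_3(h)+L}$, i.e.\ by $h'2^n\bmod 3^L$ with $h'=h/3^{v_3(h)}$ coprime to $3$. Since $2$ is a primitive root modulo $3^L$, as $n$ runs over a complete period of length $2\cdot3^{L-1}$, the residue $h'2^n$ runs over all units mod $3^L$, and each appears equally often. Choose $L$ so that $3^L\le N$. Then $[0,N)$ consists of $\approx N/3^L$ full periods plus a remainder. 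Within a full period, the digit strings are the uniformly distributed unit residues mod $3^L$, so the number of $n$ whose string has fewer than $\delta L$ digits equal to $1$ is at most $3^L\cdot\exp(-cL)$ by a standard binomial large-deviation count. The unit condition only affects the last digit, so it is harmless.

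For the remaining $n$, $|\widehat\mu(h2^n)|\le\cos(\pi/9)^{\delta L}=N^{-\alpha}$ for some explicit $\alpha>0$, while the exceptional set has size $\le N^{1-\beta}$. Optimizing $L$, $\delta$, and the large-deviation exponent produces constants of the stated shape, $0.078$ and $0.922$. Uniformity in $h$ is automatic, because only $h'\bmod 3^L$ enters and the permutation argument does not depend on $h'$.

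The main obstacle is making the digit-to-cosine comparison rigorous, including carries and boundary effects between consecutive positions. I would handle this by grouping positions in disjoint pairs and using the fact that $\|x/3^q\|\ge 1/9$ when the pair of digits is not $00$ or $22$. The second difficulty is getting the numerical exponent sum to equal exactly $1$; a clean form falls out if one takes the exceptional threshold $N^{-\alpha}$ and the exceptional count $N^{1-\alpha}$ with matching $\alpha$.
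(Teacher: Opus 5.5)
Your mechanism (full periods of $2^n \bmod 3^L$, then a large-deviation count of digit patterns) can be made to work, but as written two steps fail.

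\textbf{The pointwise digit claim is false.} The factor $|\cos(2\pi x/3^q)|$ of $|\widehat\mu(x)|$ is close to $1$ whenever $x/3^q$ is close to $0$ \emph{or to} $\tfrac12=(0.111\ldots)_3$ modulo $1$. So ternary $1$'s of $x$ are precisely the bad digits. For example, $x=13=111_3$ with $q=3$ gives $|\cos(26\pi/27)|=\cos(\pi/27)>\cos(\pi/9)$, and longer runs of $1$'s push the factor to $1$. Your pair criterion $\|x/3^q\|\ge 1/9$ controls $|\cos(\pi x/3^q)|$, not $|\cos(2\pi x/3^q)|$. The repair is the normalization used in the paper: $|\widehat\mu(h2^n)|=P(h2^{n+1})$ with $P(y)=\prod_{q\ge1}|\cos(\pi y/3^q)|$. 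If the ternary digit of $y$ in position $q-1$ is $1$, then $y/3^q \bmod 1\in[1/3,2/3)$, and that factor is at most $\cos(\pi/3)=1/2$.

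\textbf{The constant $\cos(\pi/9)$ cannot produce the stated exponent.} So the assertion that optimizing gives $0.078$ is false for your scheme. Equidistribution needs full periods, which forces $3^L\lesssim N$. Even if all $L$ factors in the window were at most $\cos(\pi/9)$, you would only get $\cos(\pi/9)^L\ge N^{\log\cos(\pi/9)/\log 3}\approx N^{-0.057}$. That does not beat $N^{-0.078}$, and grouping into disjoint pairs halves the exponent again.

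\textbf{With the bound $1/2$ per digit $1$ the argument does work.}
\begin{itemize}
\item Take $L=\lfloor\log_3N\rfloor$ and $T=2\cdot3^{L-1}$.
\item Cover $[0,N)$ by $\lceil N/T\rceil\le 2N/T$ blocks of $T$ consecutive $n$. On each block $h'2^{n+1}\bmod 3^L$ hits every unit exactly once, which also disposes of your remainder.
\item Call $n$ good if at least $L/8$ of the digits in positions $1,\dots,L-1$ equal $1$, and apply Hoeffding.
\end{itemize}
Good $n$ satisfy $|\widehat\mu(h2^n)|\le 2^{-L/8}\ll N^{-\gamma/8}$, with $\gamma/8\approx 0.0789$. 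Bad $n$ number $\ll N e^{-2(5/24)^2L}\ll N^{1-0.079}$. Both exponents clear $0.078$, uniformly in $h$.

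\textbf{Comparison with the paper.} The paper does not reprove this lemma, but it follows from Lemma~\ref{LemL2-dyadic} in one line. By Chebyshev,
$\#\{0\le n<N: |\widehat\mu(h2^n)|>C_1N^{-0.078}\}\le (c/C_1)^2 N^{1-\gamma+0.156}$, and $1-\gamma+0.156\approx 0.525<0.922$. Your first sentence announces exactly this $L^2$-plus-Chebyshev plan, but you never prove an $L^2$ bound. The paper obtains it from two ingredients:
\begin{itemize}
\item the exact identity of Lemma~\ref{lemaverageCOS2}: the average over $y\equiv u \bmod 3$ of $\prod_{q\le r+1}\cos^2(\pi y/3^q)$ is $2^{-r-2}$;
\item Cassels' Lemma~\ref{lemmod3lr} applied to $4^n$.
\end{itemize}
This needs no digit bookkeeping or large deviations. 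It captures the true mean-square size $N^{-\gamma}$ of $|\widehat\mu(h2^n)|^2$, which is why it yields a far smaller exceptional set than a first-order digit count.
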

Lemma~\ref{Bakerlemma} establishes  that,  despite  $\mu$ lacking global Fourier decay, $\widehat{\mu}(h 2^n)$ decays polynomially to zero for all but a  relatively small set of $n$. In this section, instead,  we directly estimate the partial sums of the Fourier coefficients of $\mu$ along the sequence $h2^n$ ($n\in\N$) for any fixed $h\in\Z\setminus\{0\}$. 
The main results are the following two lemmas, which are crucial in the subsequent  development of this paper. Recall that
\[\gamma=\frac{\log2}{\log3},\]
which is the Hausdorff dimension of $C$. 
\begin{lem}\label{LemL2-dyadic}
	There exists an absolute constant $c>0$ such that, for every non-zero integer $h$, every integer $a\ge 0$, and every integer $N\ge 1$,
\begin{equation}\label{eq:L2-main}
		\sum_{n=a}^{a+N-1}|\widehat\mu(h2^n)|^2
		\leq c^2 N^{1-\gamma}.
	\end{equation}
	Consequently, 
\begin{equation}\label{eq:L1-main}
		\sum_{n=a}^{a+N-1}|\widehat\mu(h2^n)|
		\leq c N^{1-\frac{\gamma}{2}}.
	\end{equation}
\end{lem}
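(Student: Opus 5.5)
The plan is to replace $|\widehat\mu|^2$ by a finite truncation of its product formula, which is periodic modulo a power of $3$. Along the orbit $h2^n$ this truncation can then be summed exactly by Parseval, using that $2$ is a primitive root modulo $3^Q$.

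\emph{Step 1: remove powers of $3$ from $h$.} For an integer $x$,
\[
|\widehat\mu(3x)|=|\cos(2\pi x)|\,|\widehat\mu(x)|=|\widehat\mu(x)|,
\]
because the $q=1$ factor of the product for $\widehat\mu(3x)$ is $|\cos(2\pi x)|=1$ and the remaining factors reproduce $|\widehat\mu(x)|$. So we may assume $3\nmid h$.

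\emph{Step 2: truncate and identify the periodic function.} For $Q\ge 1$ set
\[
P_Q(x):=\prod_{q=1}^{Q}\cos^2\Bigl(\frac{2\pi x}{3^q}\Bigr)\ \ge\ |\widehat\mu(x)|^2 .
\]
By the product formula, $P_Q(x)=|\widehat{\mu_Q}(x)|^2$, where $\mu_Q$ is the uniform measure on the $2^Q$ points $d/3^Q$. Here $d$ runs over the integers $\sum_{q=1}^{Q}a_q3^{Q-q}$ with $a_q\in\{0,2\}$. These $d$ are pairwise distinct and lie in $[0,3^Q)$, so they are distinct residues modulo $3^Q$. Hence $P_Q$ is $3^Q$-periodic on $\Z$. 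Parseval on $\Z/3^Q\Z$ then gives
\[
\sum_{r\bmod 3^Q}P_Q(r)=3^Q\cdot\frac{2^Q}{4^Q}=\Bigl(\frac32\Bigr)^Q .
\]

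\emph{Step 3: use that $2$ is a primitive root.} Since $2$ is a primitive root modulo $3^Q$ for all $Q$, the map $n\mapsto h2^n \bmod 3^Q$ has period $M_Q=\varphi(3^Q)=2\cdot3^{Q-1}$. Because $3\nmid h$, the residues $h2^n$ over any $M_Q$ consecutive $n$ are exactly the units modulo $3^Q$, each hit once.

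\emph{Step 4: choose $Q$ and conclude.} Given $N$, take the least $Q$ with $M_Q\ge N$, so that $3^Q\le 9N/2$. The window $a\le n<a+N$ then meets each unit residue at most once, and therefore
\[
\sum_{n=a}^{a+N-1}|\widehat\mu(h2^n)|^2\le\sum_{u\in(\Z/3^Q)^\times}P_Q(u)\le\Bigl(\frac32\Bigr)^Q=(3^Q)^{1-\gamma}\le\Bigl(\frac92\Bigr)^{1-\gamma}N^{1-\gamma}.
\]
This proves \eqref{eq:L2-main} with a constant independent of $h$ and $a$. Applying Cauchy--Schwarz to this bound gives \eqref{eq:L1-main}:
\[
\sum_{n=a}^{a+N-1}|\widehat\mu(h2^n)|\le N^{1/2}\cdot cN^{(1-\gamma)/2}=cN^{1-\gamma/2}.
\]

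I do not expect a serious obstacle. The two points needing care are that the truncated sum runs over units rather than all residues, which only helps since $P_Q\ge 0$, and that the points $d$ are distinct modulo $3^Q$, so that Parseval gives exactly $2^Q/4^Q$ per residue on average. The primitive-root fact is classical: $2$ generates modulo $9$, and hence modulo every $3^Q$.
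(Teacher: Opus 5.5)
Your proof is correct and is essentially the paper's argument: truncate the infinite product at level $Q\approx\log_3 N$, use that the orbit $h2^n$ runs once through the relevant residue classes modulo $3^Q$, and evaluate the truncated $\cos^2$ product by orthogonality of characters to get $(3/2)^Q\asymp N^{1-\gamma}$. The differences are only streamlinings: you use that $2$ is a primitive root modulo $3^Q$ where the paper uses Cassels' lifting-the-exponent lemma for $b=4$ followed by an even/odd split of $n$, and you bound the sum over units by the full residue sum via Parseval for $\mu_Q$ where the paper computes the exact average over the class $y\equiv u \bmod 3$ (Lemma~\ref{lemaverageCOS2}).
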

In Lemma~\ref{LemL2-dyadic},
the case where $a=0$ is of particular interest, and we state it below.
\begin{cor}\label{cor:prefix}
	For any $h\in\Z\setminus\{0\}$ and  $N\in \N$, we have 
	\begin{equation*}\label{eqsumestimate}
		\sum_{n=0}^{N-1}|\widehat\mu(h2^n)|^2
		\ll N^{1-\gamma} \quad \text{ and }\quad 
		\sum_{n=0}^{N-1}|\widehat\mu(h2^n)|
		\ll N^{1-\frac{\gamma}{2}},
	\end{equation*}
	where the underlying constants are independent of $N$ and  $h$. 
\end{cor}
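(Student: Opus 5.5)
The statement to prove is Corollary~\ref{cor:prefix}, which is exactly the case $a=0$ of Lemma~\ref{LemL2-dyadic}. The plan is therefore to specialize that lemma and check that the indexing and the uniformity of constants carry over.

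Fix $h\in\Z\setminus\{0\}$ and $N\in\N$, and apply \eqref{eq:L2-main} with $a=0$. The summation range $n=a,\dots,a+N-1$ then becomes $n=0,\dots,N-1$, and we get
\[
\sum_{n=0}^{N-1}|\widehat\mu(h2^n)|^2\le c^2N^{1-\gamma}.
\]
In the same way, \eqref{eq:L1-main} with $a=0$ gives
\[
\sum_{n=0}^{N-1}|\widehat\mu(h2^n)|\le cN^{1-\frac{\gamma}{2}}.
\]

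The constant $c$ in Lemma~\ref{LemL2-dyadic} is absolute: it depends on neither $h$, $a$, nor $N$. Hence the implied constants in both bounds of the corollary are $c^2$ and $c$, and these are uniform in $N$ and $h$, as the corollary requires.

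The $L^1$ bound can also be obtained directly from the $L^2$ bound, independently of \eqref{eq:L1-main}. By Cauchy--Schwarz,
\[
\sum_{n=0}^{N-1}|\widehat\mu(h2^n)|\le N^{1/2}\Bigl(\sum_{n=0}^{N-1}|\widehat\mu(h2^n)|^2\Bigr)^{1/2}\le cN^{1-\frac{\gamma}{2}}.
\]

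I do not expect any obstacle here. All the substance lies in Lemma~\ref{LemL2-dyadic}, which we are allowed to assume.
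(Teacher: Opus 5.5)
Your proposal is correct and matches the paper: the corollary is the case $a=0$ of Lemma~\ref{LemL2-dyadic}, and since $c$ is absolute the bounds are uniform in $N$ and $h$. Your Cauchy--Schwarz remark is the same step the paper uses inside that lemma's proof to derive \eqref{eq:L1-main} from \eqref{eq:L2-main}.
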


A direct consequence of Corollary~\ref{cor:prefix} is the following result, which is particularly important in proving Theorem~\ref{Mainthm}; see the proofs of Lemmas~\ref{Lemma1} and~\ref{Lemma2} in Section~\ref{S3}.

\begin{lem}\label{lem:weightedL1}
	Let  $0< \sigma<1-\frac{\gamma}{2}$.  Then for any $h\in\Z\setminus\{0\}$ and  $N\in\N$, we have
\begin{equation*}\label{eq:weighted-L1}
		\sum_{1\leq n\leq N}n^{- \sigma}|\widehat\mu(h2^n)|
		\ll_{\sigma} N^{1-\frac{\gamma}{2}-\sigma},
	\end{equation*}
	where the implicit constant is independent of $N$ and $h$. 
\end{lem}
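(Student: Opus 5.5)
The plan is to derive Lemma~\ref{lem:weightedL1} from the prefix estimate in Corollary~\ref{cor:prefix} by Abel (partial) summation. Fix $h\in\Z\setminus\{0\}$ and write $a_n:=|\widehat\mu(h2^n)|$ and
\[
S(M):=\sum_{n=1}^{M}a_n .
\]
Since $a_0\ge 0$, Corollary~\ref{cor:prefix} applied with $N$ replaced by $M+1$ gives $S(M)\le c'(M+1)^{1-\gamma/2}\ll M^{1-\gamma/2}$ for every $M\ge1$, with $c'$ independent of $h$. We also set $S(0)=0$.

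Summation by parts then gives
\[
\sum_{n=1}^{N}n^{-\sigma}a_n=\sum_{n=1}^N n^{-\sigma}\bigl(S(n)-S(n-1)\bigr)
= N^{-\sigma}S(N)+\sum_{n=1}^{N-1}S(n)\bigl(n^{-\sigma}-(n+1)^{-\sigma}\bigr).
\]
The boundary term is bounded by $N^{-\sigma}\cdot c'' N^{1-\gamma/2}=c''N^{1-\gamma/2-\sigma}$.

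For the sum, the mean value theorem gives $0\le n^{-\sigma}-(n+1)^{-\sigma}\le \sigma n^{-\sigma-1}$. Hence the sum is at most
\[
\sigma c''\sum_{n=1}^{N-1} n^{-\gamma/2-\sigma}.
\]
By the hypothesis $\sigma<1-\frac{\gamma}{2}$, the exponent $-\gamma/2-\sigma$ exceeds $-1$. Comparing with an integral therefore gives
\[
\sum_{n=1}^{N-1} n^{-\gamma/2-\sigma}\le \frac{N^{1-\gamma/2-\sigma}}{1-\gamma/2-\sigma}+O(1)\ll_\sigma N^{1-\gamma/2-\sigma}.
\]
The $O(1)$ term is absorbed because $N^{1-\gamma/2-\sigma}\ge 1$.

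Adding the boundary term and the sum yields the claimed bound. The implicit constant depends only on $\sigma$ and the absolute constant from Lemma~\ref{LemL2-dyadic}, so it is uniform in $h$ and $N$. There is no real obstacle here: all the analytic difficulty sits in Lemma~\ref{LemL2-dyadic}. The only point needing care is that the condition $\sigma<1-\gamma/2$ is exactly what makes the weighted tail sum grow like $N^{1-\gamma/2-\sigma}$ rather than like a constant or $\log N$.
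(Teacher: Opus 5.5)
Your proof is correct and follows essentially the same route as the paper: both deduce the weighted bound from the prefix estimate of Corollary~\ref{cor:prefix} by Abel summation. The only difference is cosmetic: you sum by parts discretely with the mean value theorem, whereas the paper uses the integral form of Abel's formula and bounds $\int_1^N x^{-\frac{\gamma}{2}-\sigma}\,\mathrm{d}x$.
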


\begin{proof}
Fix $h\in\Z\setminus\{0\}$. For $X\geq1$, let
\[A_h(X):=\sum_{n\in\N: 1\leq n\leq X}|\widehat\mu(h2^n)|.\]
Then by  Corollary~\ref{cor:prefix}, $A_h(X)\ll X^{1-\frac{\gamma}{2}}$. Moreover,  Abel’s Summation Formula (cf. \cite[Theorem 4.10]{Pongsriiam}) gives
\begin{align*}
		\sum_{1\leq n\leq N}n^{-\sigma}|\widehat\mu(h2^n)|
		&=N^{-\sigma}A_h(N)
		+\sigma\int_1^N x^{-\sigma-1}A_h(x)\, \mathrm{d}x \\
		&\ll_{\sigma} N^{1-\frac{\gamma}{2}-\sigma}
		+\int_1^N x^{-\frac{\gamma}{2}-\sigma}\, \mathrm{d}x\\
		&\ll_{\sigma} N^{1-\frac{\gamma}{2}-\sigma}.
\end{align*}
This proves the lemma.
\end{proof}

It remains to prove Lemma~\ref{LemL2-dyadic}.
For this purpose, we  establish several auxiliary lemmas. 

We first give some notation.  For $m\in\Z\setminus\{0\}$,  $\nu_3(m)$ denotes the {\it $3$-adic valuation} of $m$, i.e., $\nu_3(m)$ is the largest integer $n\geq 0$ such that $3^n\mid m$.  For an integer $q\geq 2$ and an integer $a$ coprime to $q$, let $\ord_q(a)$
be the {\it multiplicative order} of $a$ modulo $q$. That is, $\ord_{q}(a)$ is the smallest $k\in\N$ such that 
\[a^{k}\equiv 1\bmod q.\] The following number-theoretic lemma is due to Cassels~\cite[Lemma 2]{Cassels59}, which we include a proof for completeness.

\begin{lem}\label{lemmod3lr}
Let $b\ge 2$ satisfy $b\equiv 1\bmod{3}$. Set $\ell := \nu_3(b-1)$.
	Then, for every integer $r\ge 0$,
\begin{equation}\label{eqbn1to3r}
		\{b^n \bmod 3^{\ell+r}: 0\leq n<3^r\}
		=
		\{(1+3^{\ell}k)\bmod 3^{\ell+r}: 0\leq k<3^{r}\}.
	\end{equation}
	Equivalently, the powers $b^n$, $0\leq n<3^r$, run exactly once through all residue classes modulo $3^{\ell+r}$ that are congruent to $1$ modulo $3^\ell$.
\end{lem}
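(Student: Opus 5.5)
The plan is to establish the lifting-the-exponent statement $\nu_3(b^{3^j}-1)=\ell+j$ for all $j\ge 0$, and then read off \eqref{eqbn1to3r} by counting. First observe that $\ell\ge 1$ because $b\equiv 1\bmod 3$; also $\ell<\infty$ since $b\ge 2$ gives $b-1\neq 0$. Every power $b^n$ satisfies $b^n\equiv 1\bmod 3^\ell$. Hence the left-hand side of \eqref{eqbn1to3r} is contained in the right-hand side, which is the set of residues modulo $3^{\ell+r}$ congruent to $1$ modulo $3^\ell$. That set has exactly $3^r$ elements. So it suffices to show that the $3^r$ powers $b^n$, $0\le n<3^r$, are pairwise distinct modulo $3^{\ell+r}$. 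Equivalently, we must show $\ord_{3^{\ell+r}}(b)=3^r$; note $b$ is coprime to $3$, so the order makes sense.

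The key step is the valuation identity, proved by induction on $j$. The case $j=0$ is the definition of $\ell$. For the inductive step, write $c=b^{3^j}=1+3^{\ell+j}u$ with $3\nmid u$. Then
\[c^3-1=(c-1)(c^2+c+1),\]
and since $c\equiv 1\bmod 3$ we have $c^2+c+1\equiv 3\bmod 9$. To check this, put $c=1+3t$; then $c^2+c+1=3+9t+9t^2$. Here we use that $\ell+j\ge 1$. Therefore $\nu_3(c^2+c+1)=1$, and so $\nu_3(c^3-1)=\ell+j+1$. This small computation is the only place any care is needed: it uses $\ell\ge1$, and this is exactly why the prime $3$ causes no exceptional case here, unlike the prime $2$ with $\ell=1$.

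Now we deduce the order. Since $b^{3^r}\equiv 1\bmod 3^{\ell+r}$, the order $\ord_{3^{\ell+r}}(b)$ divides $3^r$. If it were a proper divisor, it would divide $3^{r-1}$ (when $r\ge1$). That would give $3^{\ell+r}\mid b^{3^{r-1}}-1$, contradicting $\nu_3(b^{3^{r-1}}-1)=\ell+r-1$. Hence the order is exactly $3^r$, so $b^0,\dots,b^{3^r-1}$ are distinct modulo $3^{\ell+r}$. The case $r=0$ is trivial.

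Finally, combine the two facts. We have $3^r$ distinct residues lying in a set of size $3^r$, so the two sets in \eqref{eqbn1to3r} coincide. Moreover, each residue class congruent to $1$ modulo $3^\ell$ is attained exactly once, which is the stated equivalent form.
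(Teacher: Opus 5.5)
Your proof is correct and takes essentially the same approach as the paper: show $\ord_{3^{\ell+r}}(b)=3^r$ using the $3$-adic valuation of $b^n-1$, then conclude by counting the $3^r$ residues congruent to $1$ modulo $3^\ell$. The only difference is that you prove the special case $\nu_3(b^{3^j}-1)=\ell+j$ by hand and use that the order divides $3^r$, whereas the paper cites the full Lifting the Exponent identity $\nu_3(b^n-1)=\ell+\nu_3(n)$ to rule out every $0<n<3^r$ directly.
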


\begin{proof}
	By the Lifting the Exponent Lemma \cite[Theorem 1.37]{Pongsriiam},
	\begin{equation}\label{eq:lifting-identity}
		\nu_3(b^n-1)=\ell+\nu_3(n),
		\qquad n\ge 1.
	\end{equation}
	Letting $n=3^r$ in \eqref{eq:lifting-identity},  we obtain
	\[
	b^{3^r}\equiv 1 \bmod{3^{\ell+r}}.
	\]
	If $0<n<3^r$, then $\nu_3(n)\leq r-1$. Hence
	\[
	\nu_3(b^n-1)\leq \ell+r-1,\]
	and so $b^n\not\equiv 1\bmod {3^{\ell+r}}$ for $0<n<3^r$.  Therefore,
\begin{equation*}
		\ord_{3^{\ell+r}}(b)=3^r.
	\end{equation*}
	The $3^r$ residues $b^0,b^1,\ldots,b^{3^r-1}$ are therefore distinct modulo $3^{\ell+r}$.  They all lie in the set of residue classes congruent to $1$ modulo $3^\ell$, and that set has exactly $3^r$ elements.  Hence the two sets in \eqref{eqbn1to3r} coincide.
\end{proof}

Next, we extend  Lemma~\ref{lemmod3lr} to non-zero multiples of $b^n$. 

\begin{cor}\label{corhbnmod}
	Let $b,\ell$ be as in Lemma \ref{lemmod3lr}. Let $h\in\Z\setminus\{0\}$ and write $
	h=3^m u$, where $m\geq 0$ and $u\in\Z\setminus\{0\}$ with $ \gcd(u,3)=1$. Then, for every integer $r\ge 0$,
\begin{equation}\label{eq:h-traversal}
		\{h b^n\bmod{3^{m+\ell+r}}: 0\leq n<3^r\}
		=
		\{(h+3^{m+\ell}k)\bmod{3^{m+\ell+r}}:0\leq k<3^r\}.
	\end{equation}
	\end{cor}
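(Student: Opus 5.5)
The plan is to multiply the identity of Lemma~\ref{lemmod3lr} by $h=3^m u$ and check that this multiplication carries residues modulo $3^{\ell+r}$ bijectively onto the right residues modulo $3^{m+\ell+r}$. Two elementary facts drive this. First, for integers $x,y$,
\[
3^m u x\equiv 3^m u y \pmod{3^{m+\ell+r}} \iff ux\equiv uy \pmod{3^{\ell+r}} \iff x\equiv y\pmod{3^{\ell+r}},
\]
where the last step uses $\gcd(u,3)=1$. Hence the map $x\bmod 3^{\ell+r}\mapsto hx \bmod 3^{m+\ell+r}$ is well defined and injective. Second, the same reasoning shows that multiplication by $u$ permutes the residue classes modulo $3^{\ell+r}$ that are $\equiv 1 \bmod 3^\ell$ onto the classes that are $\equiv u \bmod 3^\ell$.

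The steps, in order, are as follows.
\begin{enumerate}
\item By Lemma~\ref{lemmod3lr}, as $n$ runs over $0\le n<3^r$, $b^n \bmod 3^{\ell+r}$ runs exactly once through the classes $1+3^\ell k$ with $0\le k<3^r$.
\item Apply the injective map above. The residues $hb^n \bmod 3^{m+\ell+r}$ for $0\le n<3^r$ are then exactly the $3^r$ distinct classes
\[
h(1+3^\ell k)=h+3^{m+\ell}uk \pmod{3^{m+\ell+r}}, \qquad 0\le k<3^r.
\]
\item Since $u$ is invertible modulo $3^r$, the map $k\mapsto uk \bmod 3^r$ is a bijection of $\{0,\dots,3^r-1\}$. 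Moreover, $3^{m+\ell}uk \bmod 3^{m+\ell+r}$ depends only on $uk \bmod 3^r$. So this set equals $\{(h+3^{m+\ell}k)\bmod 3^{m+\ell+r}: 0\le k<3^r\}$, which is \eqref{eq:h-traversal}.
\end{enumerate}

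I expect no step to be a genuine obstacle. The only point needing care is the bookkeeping of moduli in step 3: $3^{m+\ell}k$ modulo $3^{m+\ell+r}$ depends only on $k \bmod 3^r$, which is exactly what makes reindexing by $u$ legitimate.

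As a by-product, step 2 shows that the $3^r$ values $hb^n$ are pairwise distinct modulo $3^{m+\ell+r}$, just as in Lemma~\ref{lemmod3lr}.
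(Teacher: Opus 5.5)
Your proposal is correct and follows the paper's own proof: both multiply the congruence $b^n\equiv 1+3^\ell t_n \pmod{3^{\ell+r}}$ from Lemma~\ref{lemmod3lr} by $h=3^m u$ to get $hb^n\equiv h+3^{m+\ell}ut_n \pmod{3^{m+\ell+r}}$. Both then use $\gcd(u,3)=1$ to reindex $ut_n$ over all residues modulo $3^r$. Your explicit check that multiplication by $h$ is well defined and injective from residues mod $3^{\ell+r}$ to residues mod $3^{m+\ell+r}$ is a step the paper leaves implicit.
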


\begin{proof} 
	By Lemma~\ref{lemmod3lr}, we have
	\[
	b^n\equiv (1+3^\ell t_n) \bmod {3^{\ell+r}},
	\]
where $t_n$ ($0\leq n<3^r$) run through all residue classes modulo $3^r$. 
	Multiplying by $h=3^m u$ gives
	\[
	h b^n \equiv (h + 3^{m+\ell}ut_n)
	\bmod{3^{m+\ell+r}}.
	\]
	Since $\gcd(u,3)=1$,  the values $ut_n$ ($0\leq n<3^r$) run through all residue classes modulo $3^r$.  This proves \eqref{eq:h-traversal}.  
\end{proof}

\begin{rem}\label{remb4case}
	For $b=4$, we  have $\ell=\nu_3(4-1)=1$.  Therefore, for any $h\in\Z\setminus\{0\}$ with $\gcd(h,3)=1$,
	\begin{equation*}
		\{h4^n \bmod 3^{r+1}:0\leq n<3^r\}
		=
		\{a\bmod{3^{r+1}}:a\equiv h\bmod{3}\}.
	\end{equation*}
\end{rem}

We also need the following elementary lemma.

\begin{lem}\label{lemaverageCOS2}
	Let $u\in\Z$ with $\gcd(u,3)=1$. Then  for every integer $r\ge 0$,
	\begin{equation*}
		\frac{1}{3^r}\sum_{\substack{y\bmod 3^{r+1}\\ y\equiv u\bmod 3}}
		\prod_{q=1}^{r+1}
		\cos^2\left(\frac{\pi y}{3^q}\right)
		=2^{-r-2}.
	\end{equation*}
\end{lem}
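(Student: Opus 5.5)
Induction on $r$ is the natural route, using $\cos^2\theta=\frac{1+\cos 2\theta}{2}$ together with the vanishing of complete character sums. Set
\[
S_r(u):=\sum_{\substack{y\bmod 3^{r+1}\\ y\equiv u\bmod 3}}\prod_{q=1}^{r+1}\cos^2\left(\frac{\pi y}{3^q}\right).
\]
The summand depends only on $y\bmod 3^{r+1}$. For $q\le r+1$ each factor $\cos^2(\pi y/3^q)$ has period $3^q$ in $y$, since $\cos^2$ has period $\pi$. So $S_r(u)$ is well defined, and we must show $S_r(u)=3^r2^{-r-2}$.

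\emph{Base case $r=0$.} Here $y\equiv u\not\equiv 0\bmod 3$, and $\cos^2(\pi y/3)=\cos^2(\pm\pi/3)=1/4=2^{-2}$, as required.

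\emph{Inductive step.} Pull out the deepest factor $q=r+1$. Write $y=y_0+3^r j$, where $y_0$ runs over the residues mod $3^r$ congruent to $u$ mod $3$ and $j\in\{0,1,2\}$. The factors with $q\le r$ depend only on $y_0$. The remaining factor averages over $j$ as follows:
\[
\sum_{j=0}^{2}\cos^2\left(\frac{\pi(y_0+3^rj)}{3^{r+1}}\right)=\sum_{j=0}^{2}\frac{1+\cos\left(\frac{2\pi y_0}{3^{r+1}}+\frac{2\pi j}{3}\right)}{2}=\frac32 .
\]
The cosine terms cancel because the three angles are equally spaced by $2\pi/3$.

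This gives the recursion $S_r(u)=\frac32\,S_{r-1}(u)$ for $r\ge 1$. The sum defining $S_{r-1}(u)$ runs over $y_0\bmod 3^r$ with $y_0\equiv u\bmod 3$, and its product runs over $q=1,\dots,r$. This matches the $y_0$-sum above, since for $r\ge1$ the condition $y\equiv u\bmod 3$ depends only on $y_0$. Iterating,
\[
S_r(u)=\left(\tfrac32\right)^r\cdot\tfrac14,
\]
and dividing by $3^r$ gives $2^{-r-2}$.

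The only delicate point is the bookkeeping: each factor with $q\le r$ is genuinely periodic mod $3^r$, which holds because $\cos^2(\pi y/3^q)$ has period $3^q$ dividing $3^r$. The case $r=1$ should be checked explicitly, since there the residue class condition mod $3$ coincides with the modulus $3^r$. Everything else is the elementary three-term cancellation.
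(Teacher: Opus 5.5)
Your proof is correct, and it takes a different route from the paper's.

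The paper does not induct. After factoring out $\cos^2(\pi u/3)=\frac14$, it writes $y=u+3s$ with $0\le s<3^r$. It then expands the whole product $\prod_{q=2}^{r+1}\cos^2(\pi y/3^q)$ using $\cos^2(\pi x)=\frac12+\frac14e(x)+\frac14e(-x)$ into exponentials with frequencies $y\sum_q\varepsilon_q3^{-q}$, $\varepsilon_q\in\{-1,0,1\}$, and averages each one over $s$. Only the constant term survives. The reason is that $B=\sum_q\varepsilon_q3^{r+1-q}$ satisfies $|B|<3^r$, so $3^r\mid B$ forces $B=0$, and a balanced-ternary expansion of $0$ has all digits zero.

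You instead peel off the finest scale $q=r+1$ one step at a time. Each step needs only the three-term cancellation $\sum_{j=0}^{2}\cos(\theta+2\pi j/3)=0$, so the uniqueness argument for $B$ disappears. Your recursion $S_r(u)=\frac32S_{r-1}(u)$ also shows why the answer is a product of per-scale averages: given $y\bmod 3^r$, the next ternary digit is uniform, and averaging $\cos^2$ over shifts by $\pi/3$ gives $\frac12$.

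The paper's one-shot expansion is the template it reuses for Lemma~\ref{Keyel}. Your peeling argument transfers there just as easily, using $\sum_{j=0}^{b-1}e(dj/b)=0$ for $d=k_{j_2}-k_{j_1}\in[1,b-1]$.

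Your bookkeeping at $r=1$ is right: $y_0$ is then just $u\bmod 3$, and $S_1(u)=\frac14\cdot\frac32$.
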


\begin{proof}
The case \(r=0\) is immediate, since for $u\in \{1,2\}$, 
\begin{equation}\label{eqcos222}
\cos^2\left(\frac{\pi u}{3}\right)=\frac14=2^{-2}.
\end{equation}
Assume now that \(r\ge 1\).

By \eqref{eqcos222} we also have 
\[\prod_{q=1}^{r+1}
\cos^2\left(\frac{\pi y}{3^q}\right)=2^{-2}\prod_{q=2}^{r+1}
\cos^2\left(\frac{\pi y}{3^q}\right).\]
Write
	\[
	y=u+3s,
	\qquad 0\leq s<3^r.
	\]
Then it suffices to show that  
\begin{equation}\label{eq13fs0}
\frac{1}{3^r}
\sum_{s=0}^{3^r-1}
\prod_{q=2}^{r+1}
\cos^2\left(\frac{\pi (u+3s)}{3^q}\right)
=2^{-r}.
\end{equation}
Using the identity 
\begin{equation*}
		\cos^2(\pi x)=\frac12+\frac14 e(x)+\frac14 e(-x),
\end{equation*}
	we expand
	\[
	\prod_{q=2}^{r+1}
	\cos^2\left(\frac{\pi y}{3^q}\right).\]
	A typical term is of the form
	\[
	c_{\boldsymbol\varepsilon}
	e\left(y\sum_{q=2}^{r+1}\frac{\varepsilon_q}{3^q}\right),
	\qquad
	\varepsilon_q\in\{-1,0,1\}.
	\]
After substituting \(y=u+3s\), its average over \(s\) is equal to $	c_{\boldsymbol\varepsilon}e\left(u\sum_{q=2}^{r+1}\frac{\varepsilon_q}{3^q}\right)$ times 
\begin{equation}\label{eqaverage3r}
\frac{1}{3^r}\sum_{s=0}^{3^r-1}
e\left(s\sum_{q=2}^{r+1}\varepsilon_q 3^{1-q}\right).
\end{equation}
Put
\begin{equation}\label{Bternary}
B=\sum_{q=2}^{r+1}\varepsilon_q 3^{r+1-q}.
\end{equation}
Then \eqref{eqaverage3r} is rewritten as 
	\[
	\frac{1}{3^r}\sum_{s=0}^{3^r-1} e\left(\frac{sB}{3^r}\right),
	\]
	which equals $1$ when $B\equiv 0\bmod {3^r}$ and vanishes  otherwise.  But
	\[
	|B|\leq 1+3+\cdots+3^{r-1}=\frac{3^r-1}{2}<3^r.
	\]
	Thus $B\equiv 0\bmod {3^r}$ implies $B=0$. This in turn forces that
\begin{equation}\label{eqepss0}
\varepsilon_2=\varepsilon_3=\cdots=\varepsilon_{r+1}=0.
\end{equation}
Indeed, assume $B=0$. Then modulo \(3\) in \eqref{Bternary}, we get
\(\varepsilon_{r+1}\equiv 0\bmod 3\). Since
\(\varepsilon_{r+1}\in\{-1,0,1\}\), this gives
\(\varepsilon_{r+1}=0\). Dividing by \(3\) and repeating the argument,
we obtain \eqref{eqepss0}. 
Therefore, 
\[\frac{1}{3^r}
\sum_{s=0}^{3^r-1}
\prod_{q=2}^{r+1}
\cos^2\left(\frac{\pi (u+3s)}{3^q}\right)
=\frac{1}{3^r}
\sum_{s=0}^{3^r-1}
\prod_{q=2}^{r+1}\frac{1}{2}=2^{-r}.\]
This proves \eqref{eq13fs0} and thus completes the proof of the lemma. 
\end{proof}

Now we are ready to give the proof of Lemma~\ref{LemL2-dyadic}.

\begin{proof}[Proof of Lemma~\ref{LemL2-dyadic}]
For convenience,  define
\begin{equation*}
	P(x) := \prod_{j=1}^{\infty}
	\left|\cos\left(\frac{\pi x}{3^j}\right)\right|.
\end{equation*}
Then
\begin{equation*}
	|\widehat\mu(x)| = P(2x).
\end{equation*}	
	It is enough to prove the corresponding estimates for $P(h2^n)$.
	
We first show that  for every $h\in \Z\setminus\{0\}$ and every $r\ge 0$,
	\begin{equation}\label{eq:4-block}
		\sum_{n=0}^{3^r-1} P(h4^{n})^2
		\leq 3^r2^{-r-2}.
	\end{equation}
	Write
	\[
	h=3^m u,\] 
	where  $m\geq0, u\in \Z\setminus\{0\}$ with  $\gcd(u,3)=1$.
	By the definition of $P$,
	\[
	P(h4^{n})^2
	= \prod_{j=1}^{\infty}
	\cos^2\left(\frac{\pi 3^m u4^{n}}{3^j}\right).
	\]
	The factors with $j\leq m$ are equal to $1$, so after relabelling $q=j-m$ we get
	\[
	P(h4^{n})^2
	= \prod_{q=1}^{\infty}
	\cos^2\left(\frac{\pi u4^{n}}{3^q}\right).
	\]
	Since every factor is at most $1$, we have 
\begin{equation*}
		P(h4^{n})^2
		\le
		\prod_{q=1}^{r+1}
		\cos^2\left(\frac{\pi u4^{n}}{3^q}\right).
	\end{equation*}
Applying Lemma~\ref{lemmod3lr} to $b=4$ (see Remark~\ref{remb4case}), the residues
	\[
	u4^{n}\bmod{3^{r+1}},
	\qquad 0\leq n<3^r,
	\]
	run exactly once through all residue classes $y\bmod{3^{r+1}}$ with $y\equiv u\bmod 3$.  Combining this with Lemma~\ref{lemmod3lr} gives
\[\sum_{n=0}^{3^r-1} P(h4^{n})^2
		\leq
		\sum_{\substack{y\bmod 3^{r+1}\\ y\equiv u\bmod 3}}
		\prod_{q=1}^{r+1}
		\cos^2\left(\frac{\pi y}{3^q}\right)=3^r2^{-r-2}.\]
	This proves \eqref{eq:4-block}.
	
For $N\in\N$, set 
	\[
	S_4(a,N;h):=\sum_{n=a}^{a+N-1} P(h4^n)^2.
	\]
Let $R\geq 0$ be the integer such that 
\[3^R\leq N <3^{R+1}.\]
 Then by \eqref{eq:4-block} (in which we take $h$ to be $h4^{a}$),  we have
	\[
	S_4(a,N;h)
	\leq \sum_{n=0}^{3^{R+1}-1}P(h4^{a}\cdot4^{n})^2\leq 3^{R+1}2^{-(R+1)-2}
	\ll \left(\frac32\right)^R.
	\]
	Since $3^{R}\leq N<3^{R+1}$,
	\[
	\left(\frac32\right)^R
	=(3^R)^{1-\log2/\log3}
	\leq N^{1-\gamma}.
	\]
	Thus
	\begin{equation}\label{eq:4-arbitrary}
		\sum_{n=a}^{a+N-1}P(h4^n)^2\ll N^{1-\gamma} .
	\end{equation}

Next we deduce from \eqref{eq:4-arbitrary} the desired estimate \eqref{eq:L2-main} by splitting $[a,a+N)\cap\mathbb{Z}$ into even and odd $n$'s.   Since 
	\[
	P(h2^n)^2=\begin{cases}P(h4^k)^2 & \text{ if } n=2k,\\
	P(2h4^k)^2 &\text{ if } n=2k+1,
	\end{cases}
	\]
 applying \eqref{eq:4-arbitrary} to $h$ and to $2h$ gives
\begin{equation*}
\sum_{n=a}^{a+N-1}P(h2^n)^2\ll N^{1-\gamma}.
\end{equation*}
This proves \eqref{eq:L2-main}.
	
Finally, by Cauchy-Schwarz inequality, we have 
	\[
	\begin{aligned}
		\sum_{n=a}^{a+N-1}|\widehat\mu(h2^n)|
		&\leq N^{1/2}
		\left(\sum_{n=a}^{a+N-1}|\widehat\mu(h2^n)|^2\right)^{1/2} \\
		&\ll N^{(1+1-\gamma)/2}=N^{1-\frac{\gamma}{2}}.
	\end{aligned}
	\]
	This proves \eqref{eq:L1-main} and thus completes the proof of the lemma.
\end{proof}

\section{Smoothing and Integral estimates}\label{S3}

In this section, based on the estimate established in 
Lemma~\ref{lem:weightedL1},  we follow the method of \cite[Section 2.2]{Baker25A} to  derive integral estimates for smooth approximations of the counting function appearing in Theorem~\ref{Mainthm}.  

For a $\mathbb{Z}$ periodic function $f:\mathbb{R}\to\mathbb{R}$ with $\int_{0}^{1}|f(x)|\, \mathrm{d} x<\infty$, the Fourier coefficients of $f$ are given by 
\begin{equation*}
	\label{Fouriertransform2}
	\widehat{f}(k):=\int_{0}^{1}f(x)e^{-2\pi i k x}\, \mathrm{d} x,  \qquad k\in\Z.
\end{equation*}  
For bounded $f$, it's modulus of continuity is defined as
$$\omega_{f}(x):=\sup\{|f(x_{1})-f(x_{2})|:|x_{1}-x_{2}|\leq x\},\quad x\geq0.$$
Without causing confusion, we just write $\omega_{f}(x)$ as $\omega(x)$ to ease notation.

Similar to \cite{Baker25A}, we will make use of the following Jackson's inequality. 

\begin{lem}\cite[Theorem \Rmnum{4}, Chapter 1]{Jackson94T}\label{Jacksoninq}
	Let \( f : \mathbb{R} \to \mathbb{R} \) be a \( \mathbb{Z} \) periodic \( C^1 \) function such that $f'$ has a modulus of continuity \( \omega \). Then for any $N\in\N$, 
	\[
	\left| f(x) - \sum_{k=-N}^N \widehat{f}(k)e^{2\pi ik x} \right| \ll  \frac{\log N}{N} \omega(1/N), \quad \forall x\in \R,
	\]
 where the implicit constant is independent of \( f \) or \( N \).
\end{lem}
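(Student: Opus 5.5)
The statement is Jackson's classical theorem on the uniform convergence rate of Fourier partial sums. I would prove it in two independent steps. First, I would find a good trigonometric polynomial approximant $T_N$ of degree at most $N$ with error $\ll N^{-1}\omega(1/N)$. Second, I would pass from $T_N$ to the Fourier partial sum $S_Nf(x)=\sum_{|k|\le N}\widehat f(k)e^{2\pi ikx}$ using the Lebesgue constant. That second step is where the factor $\log N$ comes from. (For $N=1$ the right-hand side vanishes as written, so one reads $\log N$ as $\log(N+1)$, or simply takes $N\ge 2$; this is harmless for the applications.)

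\textbf{Step 1 (direct estimate).} Take a Jackson-type kernel $K_M(t)=\lambda_M\left(\frac{\sin(\pi M t)}{\sin(\pi t)}\right)^{8}$ with $M\asymp N/8$, normalised so that $\int_{-1/2}^{1/2}K_M=1$. It is nonnegative, even, a trigonometric polynomial of degree at most $N/2$, and satisfies the moment bounds $\int_{-1/2}^{1/2}|t|^jK_M(t)\,\mathrm{d}t\ll N^{-j}$ for $j=1,2,3$. Since we need an error of order $N^{-1}$ while using only the smoothness of $f'$, a plain convolution is not enough. I would instead use the second-difference operator
\[
T_N(x)=\int_{-1/2}^{1/2}K_M(t)\bigl(2f(x+t)-f(x+2t)\bigr)\,\mathrm{d}t,
\]
which is a trigonometric polynomial of degree at most $N$. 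Then
\[
f(x)-T_N(x)=\int K_M(t)\,\Delta_t^2f(x)\,\mathrm{d}t,\qquad \Delta_t^2 f(x)=f(x+2t)-2f(x+t)+f(x).
\]
Write $\Delta_t^2f(x)=\int_0^t\bigl(f'(x+t+s)-f'(x+s)\bigr)\,\mathrm{d}s$. Together with the subadditivity bound $\omega(|t|)\le(1+N|t|)\omega(1/N)$, this gives
\[
|\Delta_t^2f(x)|\le |t|\,\omega(|t|)\le |t|(1+N|t|)\,\omega(1/N).
\]
Integrating against $K_M$ and using the first and second moment bounds yields $\|f-T_N\|_\infty\ll N^{-1}\omega(1/N)$. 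The implied constant depends only on the kernel, not on $f$.

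\textbf{Step 2 (Lebesgue constant).} Since $S_NT_N=T_N$, we have $f-S_Nf=(f-T_N)-S_N(f-T_N)$. Now $\|S_Ng\|_\infty\le L_N\|g\|_\infty$, where $L_N=\int_0^1|D_N|$ is the $L^1$ norm of the Dirichlet kernel, and the standard computation gives $L_N=\frac{4}{\pi^2}\log N+O(1)$. Hence
\[
\|f-S_Nf\|_\infty\le(1+L_N)\|f-T_N\|_\infty\ll \frac{\log N}{N}\,\omega(1/N).
\]

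The main obstacle is Step 1: one must get the rate $N^{-1}\omega(1/N)$ rather than just $\omega_f(1/N)$. This is why I use the second difference, so that the derivative's modulus of continuity enters, together with a kernel whose second moment is $O(N^{-2})$. I would first check the moment bounds for the eighth-power Fejér-type kernel by splitting the integral at $|t|=1/M$. On $|t|\le 1/M$ use $K_M\ll M$. On $1/M<|t|\le 1/2$ use $K_M\ll M^{-7}|t|^{-8}$. Step 2 is completely standard.
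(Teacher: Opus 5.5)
Your proof is correct. The paper does not prove this lemma; it quotes Jackson's theorem. Your two-step scheme is the classical argument behind that citation: first a direct estimate $\|f-T_N\|_\infty\ll N^{-1}\omega(1/N)$ for some trigonometric polynomial $T_N$ of degree at most $N$, then Lebesgue's inequality $\|f-S_Nf\|_\infty\le(1+L_N)\|f-T_N\|_\infty$ with $L_N=\frac{4}{\pi^2}\log N+O(1)$.

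You depart from the textbook route only in Step~1. The standard proof gets the extra factor $1/N$ in three moves:
\begin{enumerate}
\item approximate $f'$ within $O(\omega(1/N))$ by a mean-zero trigonometric polynomial;
\item integrate that polynomial;
\item apply the Lipschitz case of Jackson's theorem to the remainder.
\end{enumerate}
You instead obtain the factor in one stroke from the operator $2f(x+t)-f(x+2t)$ and a kernel with second moment $O(N^{-2})$. Your version needs only one kernel computation and works verbatim with the second modulus of smoothness. The textbook version reuses the first-order theorem and iterates to higher derivatives.

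Two small remarks. First, your observation that the statement degenerates at $N=1$ is right; read $\log N$ as $\log(N+1)$, which is harmless for the application in Section~3. Second, the subadditivity bound $\omega(|t|)\le(1+N|t|)\omega(1/N)$ requires $\omega$ to be the genuine modulus $\omega_{f'}$. If $\omega$ is merely a majorant, run the argument with $\omega_{f'}$ and use $\omega_{f'}(1/N)\le\omega(1/N)$ at the end.
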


Throughout this section, fix  $$0<\tau\leq \frac{\log2}{6\log3}.$$ 
In the following, 
we  apply the technique in \cite{Baker25A} to construct smooth approximations to the indicator  functions for the sets 
\[\{x\in \R: \|2^{n}x-x_n\|<n^{-\tau}\}, \quad n\in\N.\]
To this end, let $(x_{n})_{n=1}^{\infty}$ be a sequence of real numbers in $[0,1]$. Given constants $c_{1},c_{2}>0$ with $c_{1}<c_{2}$, we let $(f_n)_{n=1}^{\infty}$ be a sequence of $C^2$ real-valued functions on $\R$ satisfying the following properties:
\begin{enumerate}
	\item Each $f_n$ is $\mathbb{Z}$ periodic.
	\item $0\leq f_{n}(x)\leq 1$ for every $x\in\mathbb{R}$ and $n\in \mathbb{N}$.
	\item $f_{n}(x)=1$ for $x\in [x_n-c_{1}n^{-\tau},x_n+c_{1}n^{-\tau}]+\mathbb{Z}.$
	\item $f_{n}(x)=0$ for $x\notin [x_n-c_{2}n^{-\tau},x_n+c_{2}n^{-\tau}]+\mathbb{Z}.$
	\item $\|f'_{n}\|_{\infty}\ll n^{\tau}.$
	\item $\|f''_{n}\|_{\infty}\ll n^{2\tau}.$
\end{enumerate}
Here $\|\cdot\|_{\infty}$ denotes the supremum norm. We emphasise that the underlying constants appearing in (5) and (6) do depend upon $c_{1}$ and $c_{2}$, but this will not affect the result so we suppress it from our notation. 

By the property (6) and the mean value theorem, $f'_n$ has a  modulus of continuity $\omega$ such that $\omega(x)\ll n^{2\tau}|x|$. Then it follows from Lemma~\ref{Jacksoninq} that for any $N\in\N$, we have 
\begin{equation}
	\label{errorbound}
	\sum_{\ell\notin [-N,N]}c_{\ell,n}e(\ell x)=O\left(\frac{n^{2\tau}\log N}{N^{2}}\right).
\end{equation}
Here and afterwards, for each $n\in\mathbb{N}$, we let $(c_{\ell,n})_{\ell \in\mathbb{Z}}$ denote the Fourier coefficients of $f_n$. Note that for any $n\in\N$, 
\begin{equation}\label{cellnasy}
|c_{0,n}|\asymp n^{-\tau}, \quad |c_{\ell,n}|\ll n^{-\tau},\quad  \forall \ell\in \Z. 
\end{equation}

Our purpose in this section  is to  prove the following two lemmas, which respectively strengthen Lemmas 2.3 and 2.4 in \cite{Baker25A}.

\begin{lem}
	\label{Lemma1}
	Let $(f_n)_{n=1}^{\infty}$ be a sequence of functions satisfying properties (1)-(6) for some $c_{1},c_{2}$ and $(x_{n})_{n=1}^{\infty}$. Then for any $N\in\mathbb{N}$ we have 
\begin{equation}\label{eqsumfn1N}
\int \sum_{n=1}^{N}f_{n}(2^nx)\, \mathrm{d}\mu = \sum_{n=1}^{N}c_{0,n}+O\left(N^{1-\frac{\log2}{3\log3}}\log N\right)
\end{equation}
 and $$\int \sum_{n=1}^{N}f_{n}(2^nx)\, \mathrm{d}\mu \asymp N^{1-\tau}.$$
\end{lem}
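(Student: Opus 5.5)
Expand each $f_n$ in its Fourier series, truncated at a level $L_n$ chosen per $n$. By \eqref{errorbound},
\[
f_n(2^nx)=\sum_{|\ell|\le L_n}c_{\ell,n}e(\ell 2^n x)+O\!\left(\frac{n^{2\tau}\log L_n}{L_n^2}\right)
\]
uniformly in $x$. Integrating against $\mu$ and noting $\int e(\ell 2^n x)\,\mathrm d\mu=\overline{\widehat\mu(\ell 2^n)}$, we get
\[
\int f_n(2^nx)\,\mathrm d\mu=c_{0,n}+\sum_{1\le|\ell|\le L_n}c_{\ell,n}\overline{\widehat\mu(\ell2^n)}+O\!\left(\frac{n^{2\tau}\log L_n}{L_n^2}\right).
\]
Summing over $1\le n\le N$, the main term is $\sum c_{0,n}$. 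It remains to control two error pieces: the tail error, and the off-diagonal sum $\sum_{n}\sum_{1\le|\ell|\le L}|c_{\ell,n}||\widehat\mu(\ell2^n)|$.

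For the off-diagonal sum, take a uniform cutoff $L$ (depending on $N$). Using $|c_{\ell,n}|\ll n^{-\tau}$ from \eqref{cellnasy} and swapping the order of summation, it is at most
\[
\sum_{1\le|\ell|\le L}\ \sum_{n=1}^N n^{-\tau}|\widehat\mu(\ell 2^n)|\ll L\,N^{1-\frac{\gamma}{2}-\tau},
\]
by Lemma~\ref{lem:weightedL1} with $\sigma=\tau$. This step is valid because $\tau\le\gamma/6<1-\gamma/2$, and the constant is uniform in $\ell$. The tail error contributes $\ll N^{1+2\tau}\log L/L^2$. Balancing the two, $L\,N^{1-\gamma/2-\tau}\approx N^{1+2\tau}L^{-2}$, gives $L\approx N^{\gamma/6+\tau}$. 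Both errors are then about $N^{1-\gamma/3}\log N$, after using $\tau\ge0$ to drop a favourable factor in the first term: concretely $L N^{1-\gamma/2-\tau}=N^{1-\gamma/3}$ and $N^{1+2\tau}/L^2=N^{1-\gamma/3}$. Since $\gamma/3=\frac{\log 2}{3\log 3}$, this yields exactly \eqref{eqsumfn1N}.

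I expect the main obstacle to be this balancing. A cruder cutoff, such as $L=N$ or no use of the $n^{-\tau}$ weight, loses the exponent. One should also check that applying a single $L$ for all $n\le N$ is legitimate; it is, because \eqref{errorbound} holds for every truncation level, with the $n^{2\tau}\le N^{2\tau}$ bound.

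For the second assertion, \eqref{cellnasy} gives $c_{0,n}\asymp n^{-\tau}$ (indeed $c_{0,n}=\int_0^1 f_n\ge 2c_1n^{-\tau}$ and $\le 2c_2n^{-\tau}$). Hence $\sum_{n\le N}c_{0,n}\asymp N^{1-\tau}$ because $\tau<1$. The error $N^{1-\gamma/3}\log N$ is $o(N^{1-\tau})$ since $\tau\le\gamma/6<\gamma/3$. So for large $N$ the integral is $\asymp N^{1-\tau}$, and small $N$ is handled trivially, since the integral is then bounded above and below by positive constants; for the lower bound note $\int f_1(2x)\,\mathrm d\mu>0$, which follows via the main-term comparison anyway once constants are adjusted.
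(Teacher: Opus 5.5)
Your proposal is correct and is essentially the paper's proof. The paper also truncates every Fourier series at the single level $\lfloor N^{\rho}\rfloor$ with $\rho=\tau+\frac{\log 2}{6\log 3}$, which is your $L\approx N^{\gamma/6+\tau}$. It bounds the nonzero frequencies by Lemma~\ref{lem:weightedL1} with $\sigma=\tau$, and it gets the $\asymp N^{1-\tau}$ statement from $c_{0,n}\asymp n^{-\tau}$ and $\tau<\frac{\log 2}{3\log 3}$.

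Your side remark on small $N$ is wrong, though. $\int f_1(2x)\,\mathrm{d}\mu$ can vanish: the set $\{2x \bmod 1: x\in C\}$ misses the interval $(2/9,1/3)$, so take $x_1=5/18$ and $c_2<1/18$. The lower bound should therefore be read for $N$ sufficiently large, which is all the paper's argument gives.
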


\begin{lem}
	\label{Lemma2}
	Let $(f_n)_{n=1}^{\infty}$ be a sequence of functions satisfying properties (1)-(6) for some $c_{1},c_{2}$ and $(x_{n})_{n=1}^{\infty}$. For any $N\in\mathbb{N}$ we have 
	$$\int \left(\sum_{n=1}^{N}f_{n}(2^nx)-\int \sum_{n=1}^{N}f_{n}(2^ny)\, \mathrm{d}\mu(y) \right)^2\,  \mathrm{d}\mu(x)=O\left(N^{2-\frac{\log2}{6\log3}}\log N\right).$$
\end{lem}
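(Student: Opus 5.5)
The plan is to run the second-moment argument of \cite[Lemma~2.4]{Baker25A}, using Corollary~\ref{cor:prefix} (together with Lemma~\ref{LemL2-dyadic} for arbitrary shifts $a$) as the Fourier input in place of Lemma~\ref{Bakerlemma}.

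\textbf{Step 1 (truncation).} Fix $L=N^{2}$. By \eqref{errorbound}, for every $n\le N$,
\[f_n(y)=\sum_{|\ell|\le L}c_{\ell,n}e(\ell y)+O\bigl(N^{2\tau-4}\log N\bigr).\]
Insert this into the expanded square
\[\sum_{n,m\le N}\int f_n(2^nx)f_m(2^mx)\,\mathrm{d}\mu-\Bigl(\int\sum_n f_n(2^nx)\,\mathrm{d}\mu\Bigr)^2 .\]
Since $0\le f_n\le 1$ and there are $N^2$ pairs, the truncation errors contribute $O(N^{2\tau-2}\log N)$, which is negligible.

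\textbf{Step 2 (separating the zero modes).} For each pair $(n,m)$ the truncated product integrates to
\[\sum_{|\ell|,|k|\le L}c_{\ell,n}c_{k,m}\,\widehat\mu\bigl(-(\ell2^n+k2^m)\bigr).\]
Write $E_n=\sum_{0<|\ell|\le L}c_{\ell,n}\widehat\mu(-\ell2^n)$. The terms with $\ell=0$ or $k=0$ reproduce exactly the cross terms of $\bigl(\sum_n(c_{0,n}+E_n)\bigr)^2$. So the variance equals
\[\Sigma-\Bigl(\sum_n E_n\Bigr)^2+\text{(negligible)},\qquad \Sigma=\sum_{n,m}\ \sum_{0<|\ell|,|k|\le L}c_{\ell,n}c_{k,m}\widehat\mu\bigl(-(\ell2^n+k2^m)\bigr).\]
By the proof of Lemma~\ref{Lemma1}, $\sum_n E_n=O(N^{1-\gamma/3}\log N)$, so its square is $O(N^{2-2\gamma/3}\log^2N)$, which is acceptable.

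\textbf{Step 3 (the diagonal part of $\Sigma$).} By symmetry take $m=n+d$ with $d\ge0$. Then $\ell2^n+k2^m=2^n(\ell+k2^d)$.

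First consider the resonant terms, where $\ell=-k2^d$. Since $|\ell|\le L$, these force $d\le 2\log_2N$. By Cauchy--Schwarz and Parseval, $\sum_\ell|c_{\ell,n}|^2=\|f_n\|_2^2\ll n^{-\tau}$, so for each fixed $(n,d)$
\[\sum_k|c_{-k2^d,n}c_{k,n+d}|\ll 1 .\]
Summing over $n\le N$ and $d\ll\log N$ gives $O(N\log N)$.

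\textbf{Step 4 (the non-resonant part of $\Sigma$; this is the main obstacle).} Here $h=\ell+k2^d\neq0$, and the coefficients depend on $n$, so Corollary~\ref{cor:prefix} cannot be applied directly. I would first dominate them uniformly in $n\le N$. Integrating by parts twice and using property (6) gives $|c_{\ell,n}|\ll\min(n^{-\tau},n^{2\tau}\ell^{-2})$. Hence
\[|c_{\ell,n}|\le a_\ell:=\min\bigl(1,N^{2\tau}\ell^{-2}\bigr),\qquad \sum_\ell a_\ell\ll N^{\tau}.\]
Then for each $d\le N$ the contribution is bounded by
\[\sum_{\ell,k}a_\ell a_k\sum_{n\le N}|\widehat\mu(h2^n)|\ll N^{2\tau}\cdot N^{1-\gamma/2},\]
using the uniformity in $h$ of Corollary~\ref{cor:prefix}. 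Summing over $d\le N$ gives $O(N^{2-\gamma/2+2\tau})$. This is $\le N^{2-\gamma/6}$ exactly when $\tau\le\gamma/6$, which is the standing assumption $\tau\le\frac{\log2}{6\log3}$.

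If the crude domination turns out to lose more than stated, I would instead sharpen the bound using $\sum_\ell|c_{\ell,n}|\ll$ const and Lemma~\ref{lem:weightedL1} with weights $n^{-\tau}$. Collecting Steps 1--4 gives the claimed bound $O(N^{2-\frac{\log2}{6\log3}}\log N)$.
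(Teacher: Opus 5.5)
Your proof is correct. It shares the paper's skeleton: Fourier-expand each $f_n$, cancel the zero modes against the squared mean, write $m=n+d$, isolate the resonances $\ell+k2^d=0$, and bound the rest by an $\ell^1$ estimate for $\widehat\mu(h2^n)$ uniform in $h$. Where you differ is in how you treat the coefficients $c_{\ell,n}$.

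The paper truncates at the tuned level $N^{\rho}$, with $\rho=\tau+\frac{\gamma}{6}$. It then uses only the sup bound $|c_{\ell,n}|\ll n^{-\tau}$ of \eqref{cellnasy} on all $\asymp N^{\rho}$ frequencies, and bounds the diagonal and all shifts $d\le\rho\log_2N$ trivially by $N^{1+\gamma/3}\log N$. It compensates for the frequency count with the weight $n^{-2\tau}$ via Lemma~\ref{lem:weightedL1} at $\sigma=2\tau$. This gives $N\cdot N^{2\rho}\cdot N^{1-\gamma/2-2\tau}=N^{2-\gamma/6}$, independently of $\tau$.

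You instead make the truncation free ($L=N^2$) and use the decay $|c_{\ell,n}|\ll n^{2\tau}\ell^{-2}$ from property (6) to get $\sum_{\ell}\sup_{n\le N}|c_{\ell,n}|\ll N^{\tau}$. This means only the unweighted Corollary~\ref{cor:prefix} is needed. You control resonances by Parseval at cost $O(N\log N)$. Your exact identity (variance $=\Sigma-(\sum_nE_n)^2+{}$ negligible) also replaces the $A,B,C$ bookkeeping. Since $(\sum_nE_n)^2\ge 0$, you could even drop that term rather than invoke Lemma~\ref{Lemma1}.

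The paper's route keeps Lemmas~\ref{Lemma1} and~\ref{Lemma2} parallel and needs nothing about $c_{\ell,n}$ beyond \eqref{cellnasy}. Yours uses the extra second-derivative decay, and in return gives the $\tau$-dependent bound $O(N^{2+2\tau-\gamma/2})$. This equals $N^{2-\gamma/6}$ only at the endpoint $\tau=\gamma/6$ and is strictly smaller below it. Fed into the condition $\theta<2(1-\tau)$ in the proof of Theorem~\ref{Mainthm}, it would allow $\tau<\gamma/8$ in place of $\tau<\gamma/12$.
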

We prove Lemmas \ref{Lemma1} and \ref{Lemma2} by following the strategy in \cite{Baker25A}. The key differences is that we invoke Lemmas \ref{lem:weightedL1} rather than Baker's Lemma ( see Lemma \ref{Bakerlemma}) as the Fourier input. Since the require modifications are substantial, we provide detailed proofs for the sake of completeness and clarity.

\begin{proof}[Proof of Lemma \ref{Lemma1}] 
Put
	\begin{equation*}
\rho=\tau+\frac{\log2}{6\log3}.
	\end{equation*}
	Fix $N\in\mathbb{N}$. By \eqref{errorbound} and the fact that each $f_n$ coincides with its Fourier series, we have  $$\int \sum_{n=1}^{N}f_{n}(2^nx)\, \mathrm{d}\mu = \sum_{n=1}^{N}c_{0,n}+\int \sum_{n=1}^{N}\sum_{\ell\in [-\lfloor N^{ \rho}\rfloor ,\lfloor N^{\rho}\rfloor]\setminus \{0\}}c_{\ell,n}e(\ell 2^nx)\, \mathrm{d}\mu+O\left(N^{1+2\tau-2\rho}\log N\right).$$
	Note that 
	\[1+2\tau-2\rho=1-\frac{\log2}{3\log3}.\]
	So to prove \eqref{eqsumfn1N}, it suffices to show that
	\begin{equation}
		\label{n1NellLF}
		\int \sum_{n=1}^{N}\sum_{\ell\in [-\lfloor N^{\rho}\rfloor ,\lfloor N^{\rho}\rfloor]\setminus \{0\}}c_{\ell,n}e(\ell2^nx)\, \mathrm{d}\mu=O\left(N^{1-\frac{\log2}{3\log3}}\log N\right).
	\end{equation}
	We have 
	\begin{equation}\label{eq1NNgamma}
		\int \sum_{n=1}^{N}\sum_{\ell\in [-\lfloor N^{\rho}\rfloor,\lfloor N^{\rho }\rfloor]\setminus \{0\}}c_{\ell,n}e(\ell 2^nx)\, \mathrm{d}\mu=\sum_{\ell\in [-\lfloor N^{ \rho}\rfloor,\lfloor N^{ \rho}\rfloor]\setminus \{0\}}\sum_{n=1}^{N}c_{\ell,n}\int e(\ell 2^nx)\, \mathrm{d}\mu.
	\end{equation} 
     Hence by Lemma~\ref{lem:weightedL1} and the fact that $|c_{\ell,n}|\ll n^{-\tau}$, 
	\begin{align*}
		\left|\sum_{n=1}^{N}c_{\ell,n}\int e(\ell 2^nx)\, \mathrm{d}\mu\right|\ll \sum_{n=1}^{N }n^{-\tau}|\widehat{\mu}(\ell 2^n)|
		\ll N^{1-\frac{\gamma}{2}-\tau}.
	\end{align*}
Plugging this upper bound into \eqref{eq1NNgamma} and noting that $1-\frac{\gamma}{2}-\tau+\rho=1-\frac{\log2}{3\log3}$, we obtain  \eqref{n1NellLF} and so complete the proof of the first part of the lemma. The second part follows from the first part, \eqref{cellnasy} and the fact that $\tau <\frac{\log2}{3\log3}$.  
\end{proof}

\begin{proof}[Proof of Lemma \ref{Lemma2}]
	Let $\rho$ be as in Lemma~\ref{Lemma1}. Multiplying out the bracket we have 
    \begin{equation}\label{Anna}
    \begin{aligned} 
		&\quad\int \left(\sum_{n=1}^{N}f_{n}(2^nx)-\int \sum_{n=1}^{N}f_{n}(2^ny)\,\mathrm{d} \mu(y) \right)^2\,\mathrm{d}\mu(x) \\
        &=\int\left( \sum_{n=1}^{N}f_{n}(2^nx)\right)^2 \mathrm{d}\mu-\left( \int \sum_{n=1}^{N}f_{n}(2^nx)\, \mathrm{d}\mu\right)^2.
	\end{aligned}
     \end{equation}
	To bound the first term on the right hand side of \eqref{Anna}, we use \eqref{errorbound} to get
	\begin{align}
		\label{Anna1}
		&\int \left( \sum_{n=1}^{N}f_{n}(2^nx)\right)^2\, \mathrm{d}\mu\nonumber \\
		=&\int \left(\sum_{n=1}^{N}c_{0,n}+\sum_{n=1}^{N}\sum_{\ell\in [-\lfloor N^{ \rho}\rfloor,\lfloor N^{ \rho}\rfloor]\setminus \{0\}}c_{\ell,n}e(\ell 2^nx)+O\left(N^{1-\frac{\log2}{3\log3}}\log N\right)\right)^{2}\, \mathrm{d}\mu.
	\end{align} 
Since
	\begin{equation}
		\label{trivial1}
		\sum_{n=1}^{N}c_{0,n}\asymp N^{1-\tau}
	\end{equation}
	and
	\begin{equation*}
\left|\sum_{n=1}^{N}\sum_{\ell\in [-\lfloor N^{\rho}\rfloor,\lfloor N^{ \rho}\rfloor]\setminus \{0\}}c_{\ell,n}e(\ell 2^nx)\right|\ll \sum_{\ell\in [-\lfloor N^{\rho}\rfloor,\lfloor N^{ \rho}\rfloor]\setminus \{0\}}\sum_{n=1}^{N}n^{-\tau}\ll N^{1- \tau+\rho},
	\end{equation*}
we expand the bracket in \eqref{Anna1} to obtain that
	\begin{align*}
		&\int \left( \sum_{n=1}^{N}f_{n}(2^nx)\right)^2\, \mathrm{d}\mu\\
		=&\underbrace{\left(\sum_{n=1}^{N}c_{0,n}\right)^{2}}_{A}+\underbrace{2\sum_{n=1}^{N}c_{0,n}\int \sum_{n=1}^{N}\sum_{\ell\in [-\lfloor N^{\rho}\rfloor,\lfloor N^{ \rho}\rfloor]\setminus \{0\}}c_{\ell,n}e(\ell 2^nx)\, \mathrm{d}\mu}_{B}\\
		&+\underbrace{\int \sum_{n=1}^{N}\sum_{\ell \in [-\lfloor N^{ \rho}\rfloor,\lfloor N^{ \rho}\rfloor]\setminus\{0\}}\sum_{m=1}^{N}\sum_{j\in [-\lfloor N^{\rho}\rfloor,\lfloor N^{ \rho}\rfloor]\setminus\{0\}}c_{\ell,n}c_{j,m}e((\ell 2^n+j2^m)x)\, \mathrm{d}\mu}_{C}\\
		&+O\left(N^{2-\tau-\frac{\log2}{3\log3}}\log N\right)+O\left(N^{2(1-\frac{\log2}{3\log3})}(\log N)^2\right)+O\left(N^{2-\tau+\rho-\frac{\log2}{3\log3}}\log N\right) .
	\end{align*} 
    These final three error terms are all $$O\left(N^{2-\tau+\rho-\frac{\log2}{3\log3}}\log N\right)=O\left(N^{2-\frac{\log2}{6\log3}}\log N\right).$$ Therefore it remains to consider terms $A$, $B$, and $C$. By Lemma \ref{Lemma1} we know that 
	$$\left(\int \sum_{n=1}^{N}f_{n}(2^nx)\, \mathrm{d}\mu\right)^{2} = \left(\sum_{n=1}^{N}c_{0,n}\right)^{2}+O\left(N^{2-\tau-\frac{\log2}{3\log3}}\log N\right).$$ Hence,
	$$\left(\sum_{n=1}^{N}c_{0,n}\right)^{2}-\left( \int \sum_{n=1}^{N}f_{n}(2^nx)\, \mathrm{d}\mu\right)^2=O\left(N^{2-\tau-\frac{\log2}{3\log3}}\log N\right)=O\left(N^{2-\frac{\log2}{6\log3}}\log N\right).$$
	Therefore, it remains to show that $B$ and $C$ are both $O\left(N^{2-\frac{\log2}{6\log3}}\log N\right)$. This we do below.
	
From the proof of \eqref{n1NellLF} in Lemma \ref{Lemma1}, we see that $$\left|\int \sum_{n=1}^{N}\sum_{\ell\in [-\lfloor N^{ \rho}\rfloor,\lfloor N^{\rho}\rfloor]\setminus \{0\}}c_{\ell,n}e(\ell 2^nx)\, \mathrm{d}\mu\right|=O\left(N^{1-\frac{\log2}{3\log3}}\right).$$ So by \eqref{trivial1},
	$$\left|\sum_{n=1}^{N}c_{0,n}\int \sum_{n=1}^{N}\sum_{\ell\in [-\lfloor N^{ \rho}\rfloor,\lfloor N^{\rho}\rfloor]\setminus \{0\}}c_{\ell,n}e(\ell 2^nx)\, \mathrm{d}\mu\right|=O\left(  N^{2-\tau-\frac{\log2}{3\log3}}\right).$$
	Therefore the term $B$ is $O\left(N^{2-\frac{\log2}{6\log3}}\log N\right).$

	 To bound the term $C$, we rewrite it as follows:
	\begin{align*}
		&\int \sum_{n=1}^{N}\sum_{\ell\in [-\lfloor N^{\rho}\rfloor,\lfloor N^{ \rho}\rfloor]\setminus\{0\}}\sum_{m=1}^{N}\sum_{j\in [-\lfloor N^{\rho}\rfloor,\lfloor N^{ \rho}\rfloor]\setminus\{0\}}c_{\ell,n}c_{j,m}e((\ell 2^n+j2^m)x)\, \mathrm{d}\mu\\
		=&\underbrace{\int\sum_{n=1}^{N}\sum_{\ell \in [-\lfloor N^{\rho}\rfloor,\lfloor N^{ \rho}\rfloor]\setminus\{0\}}\sum_{j\in [-\lfloor N^{\rho}\rfloor,\lfloor N^{ \rho}\rfloor]\setminus\{0\}}c_{\ell,n}c_{j,n}e((\ell2^n+j2^n)x)\, \mathrm{d}\mu}_{D}\\
		&+2\underbrace{\int \sum_{n=1}^{N-1}\sum_{\ell \in [-\lfloor N^{ \rho}\rfloor,\lfloor N^{ \rho}\rfloor]\setminus\{0\}}\sum_{m=n+1}^{N}\sum_{j\in [-\lfloor N^{\rho}\rfloor,\lfloor N^{ \rho}\rfloor]\setminus\{0\}}c_{\ell,n}c_{j,m}e((\ell 2^n+j2^m)x)\, \mathrm{d}\mu}_{E}.
	\end{align*}
By \eqref{cellnasy},  
	\begin{equation*}
	\begin{aligned}
\int\sum_{n=1}^{N}\sum_{\ell\in [-\lfloor N^{ \rho}\rfloor,\lfloor N^{\rho}\rfloor]\setminus\{0\}}\sum_{j\in [-\lfloor N^{\rho}\rfloor,\lfloor N^{\rho}\rfloor]\setminus\{0\}}c_{\ell,n}c_{j,n}e((\ell 2^n+j2^n)x)\, \mathrm{d}\mu=O\left(N^{1-2\tau+2\rho}\right). 
   \end{aligned}
     \end{equation*}
    Since $1-2\tau+2\rho=1+\frac{\log2}{3\log3}$, it follows that $D$ is $O\left(N^{2-\frac{\log2}{6\log3}}\log N\right)$. Below we bound term $E$.
    
We first bound $E$ from above by an expression to which we can apply Lemma~\ref{lem:weightedL1}:
	\begin{align*}
		&\left|\int \sum_{n=1}^{N-1}\sum_{\ell\in [-\lfloor N^{\rho}\rfloor,\lfloor N^{ \rho}\rfloor]\setminus\{0\}}\sum_{m=n+1}^{N}\sum_{j\in [-\lfloor N^{\rho}\rfloor,\lfloor N^{ \rho}\rfloor]\setminus\{0\}}c_{\ell,n}c_{j,m}e((\ell 2^n+j2^m)x)\, \mathrm{d}\mu\right| \\
		=&\left|\int \sum_{n=1}^{N-1}\sum_{\ell\in [-\lfloor N^{\rho}\rfloor,\lfloor N^{ \rho}\rfloor]\setminus\{0\}}\sum_{m=n+1}^{N}\sum_{j\in [-\lfloor N^{\rho}\rfloor,\lfloor N^{ \rho}\rfloor]\setminus\{0\}}c_{\ell,n}c_{j,m}e(2^{n}(\ell+j2^{m-n})x)\, \mathrm{d}\mu\right|\\
		\ll & \sum_{n=1}^{N-1}\sum_{\ell\in [-\lfloor N^{\rho}\rfloor,\lfloor N^{ \rho}\rfloor]\setminus\{0\}}\sum_{k=1}^{N-n}\sum_{j\in [-\lfloor N^{ \rho}\rfloor,\lfloor N^{ \rho}\rfloor]\setminus\{0\}}\frac{1}{n^{2\tau}}\left|\int e(2^{n}(\ell+j2^k)x)\, \mathrm{d}\mu\right|\\
		\ll & \sum_{n=1}^{N-1}\sum_{\ell\in [-\lfloor N^{\rho}\rfloor,\lfloor 
        N^{\rho}\rfloor]\setminus\{0\}}\sum_{k=1}^{N}\sum_{j\in [-\lfloor N^{\rho}\rfloor,\lfloor 
        N^{\rho}\rfloor]\setminus\{0\}}\frac{1}{n^{2\tau}}\left|\int e(2^{n}(\ell+j2^k)x)\, \mathrm{d}\mu\right|\\
		= & \sum_{k=1}^{N}\sum_{\ell\in [-\lfloor 
        N^{\rho}\rfloor,\lfloor 
        N^{\rho}\rfloor]\setminus\{0\}} \sum_{j\in [-\lfloor N^{\rho}\rfloor,\lfloor 
        N^{\rho}\rfloor]\setminus\{0\}} \sum_{n=1}^{N-1}\frac{1}{n^{2\tau}}\left|\int e(2^{n}(\ell+j2^k)x)\, \mathrm{d}\mu\right|.
	\end{align*}
	In the third line in the above we have used \eqref{cellnasy}. To proceed, we split the last term above according to $\ell+j2^k$ equals $0$ or not. Notice that if $k$ is such that there exist $\ell,j\in [-\lfloor 
    N^{\rho}\rfloor,\lfloor N^{\rho}\rfloor]\setminus\{0\}$ for which $\ell+j2^{k}=0,$ then $k\leq \lfloor\rho\log_{2}N \rfloor$. Thus we have
	\begin{align*}
		&\sum_{k=1}^{N}\sum_{\ell \in [-\lfloor N^{ \rho}\rfloor,\lfloor N^{ \rho}\rfloor]\setminus\{0\}}\sum_{j\in [-\lfloor N^{\rho}\rfloor,\lfloor N^{ \rho}\rfloor]\setminus\{0\}} \sum_{n=1}^{N-1}\frac{1}{n^{2\tau}}\left|\int e(2^{n}(\ell+j2^k)x)\, \mathrm{d}\mu\right|\\
 =&\underbrace{\sum_{k=1}^{\lfloor\rho\log_{2}N \rfloor}\sum_{\ell\in [-\lfloor N^{ \rho}\rfloor,\lfloor N^{ \rho}\rfloor]\setminus\{0\}}\sum_{j\in [-\lfloor N^{ \rho}\rfloor,\lfloor N^{ \rho}\rfloor]\setminus\{0\}} \sum_{n=1}^{N-1}\frac{1}{n^{2\tau}}\left|\int e(2^{n}(\ell+j2^k)x)\, \mathrm{d}\mu\right|}_{F}\\
		+& \underbrace{\sum_{k=\lfloor\rho \log_{2}N \rfloor+1}^{N}\sum_{\ell\in [-\lfloor N^{ \rho}\rfloor,\lfloor N^{ \rho}\rfloor]\setminus\{0\}}\sum_{j\in [-\lfloor N^{ \rho}\rfloor,\lfloor N^{ \rho}\rfloor]\setminus\{0\}} \sum_{n=1}^{N-1}\frac{1}{n^{2\tau}}\left|\int e(2^{n}(\ell+j2^k)x)\, \mathrm{d}\mu\right|}_{G}.
	\end{align*}
	To complete the proof, it suffices to show that $F$ and $G$ are both $O\left(N^{2-\frac{\log2}{3\log3}}\log N\right).$ 
    The term $F$ is $O\left(N^{2-\frac{\log2}{6\log3}}\log N\right)$, since
	\begin{align*}
		&\sum_{k=1}^{\lfloor\rho \log_{2}N \rfloor}\sum_{\ell\in [-\lfloor N^{ \rho}\rfloor,\lfloor N^{ \rho}\rfloor]\setminus\{0\}}\! \sum_{j\in [-\lfloor N^{ \rho}\rfloor,\lfloor N^{ \rho}\rfloor]\setminus\{0\}} \!\sum_{n=1}^{N-1}\frac{1}{n^{2 \tau}}\left|\int e(2^{n}(\ell+j2^k)x)\, \mathrm{d}\mu\right| \\
        &=O(N^{1-2\tau+2\rho}\log N)
     =O\left(N^{1+\frac{\log2}{3\log3}}\log N\right)=O\left(N^{2-\frac{\log2}{6\log3}}\log N\right).
	\end{align*}
	 Now we focus on $G$. If $\ell+j2^{k}\neq 0$, since $2\tau\leq\frac{\log2}{3\log3}<1-\frac{\gamma}{2}$, we can apply Lemma~\ref{lem:weightedL1} to yield that
	\begin{align*}
		\sum_{n=1}^{N-1}\frac{1}{n^{2\tau}}\left|\int e(2^{n}(l+j2^k)x)\, \mathrm{d}\mu\right|\ll N^{1-\frac{\gamma}{2}-2\tau}
	\end{align*}
	Therefore, 
	\begin{align*}
		&\sum_{k=\lfloor\rho\log_{2}N \rfloor+1}^{N}\sum_{\ell\in [-\lfloor N^{ \rho}\rfloor,\lfloor N^{ \rho}\rfloor]\setminus\{0\}}\sum_{j\in [-\lfloor N^{\rho} \rfloor,\lfloor N^{ \rho}\rfloor]\setminus\{0\}} \sum_{n=1}^{N-1}\frac{1}{n^{2\tau}}\left|\int e(2^{n}(\ell+j2^k)x)\, \mathrm{d}\mu\right| \\
		&\ll N^{1+\rho+\rho+1-\frac{\gamma}{2}-2\tau}
		=N^{2-\frac{\log2}{6\log3}}.
	\end{align*}
	This completes the proof of the lemma.  
\end{proof}

\section{Proof of Theorem \ref{Mainthm}}\label{Sec:Mainthm}

Equipped with Lemmas~\ref{Lemma1} and  \ref{Lemma2} we are now in a position to prove Theorem~\ref{Mainthm}. We follow the strategy in \cite[Section 2.3]{Baker25A}. 

\begin{proof}[Proof of Theorem \ref{Mainthm}]
	For convenience, write 
	\[\tau_0=\frac{\log2}{12\log3},\qquad \theta=2-\frac{\log2}{6\log3}.\]
	Recall that $\theta$ is the exponent appearing in Lemma~\ref{Lemma2}. Fix $0<\tau<\tau_0$. 
    Since $\frac{\theta}{2}<1-\tau$,
	we can find a large positive integer $Q$ such that 
	\[\frac{Q\theta+1}{2}<M<Q(1-\tau)\]
	for some positive integer $M$. Let such $Q$ and $M$ be fixed.

	Let $\epsilon>0$ be arbitrary and $(f_n)_{n=1}^{\infty}$ be a sequence of $C^2$ functions satisfying properties $(1)-(6)$ for $c_{1}=1,$ $c_{2}=1+\epsilon$ and $(x_{n})_{n=1}^{\infty}$. It follows from these properties and this choice of parameters that for any $x\in C$,
	\begin{equation}
		\label{Counting upper bound}
		\#\{1\leq n\leq N:\|2^{n}x-x_n\|< n^{-\tau}\}\leq \sum_{n=1}^{N}f_{n}(2^{n}x).
	\end{equation}
	By our choice of $c_{1}$ and $c_{2}$ it is also clear that 
	\begin{equation}
		\label{abequation}
		\sum_{n=1}^{N}c_{0,n}\leq 2(1+\epsilon)\sum_{n=1}^{N}n^{-\tau}.
	\end{equation}
	By Lemma \ref{Lemma2} and Markov's inequality, we have the following for any $N\in\mathbb{N}$,
	\begin{align*}
		&\quad\mu\left(x\in C:\left|\sum_{n=1}^{N^{Q}}f_{n}(2^{n}x)-\int \sum_{n=1}^{N^{Q}}f_{n}(2^ny)\, \mathrm{d}\mu(y)\right|\geq N^{M}\right)\\
		&=\mu\left(x\in C:\left(\sum_{n=1}^{N^{Q}}f_{n}(2^{n}x)-\int \sum_{n=1}^{N^{Q}}f_{n}(2^ny)\, \mathrm{d}\mu(y)\right)^{2}\geq N^{2M}\right)\\
		&\ll \frac{N^{ Q\theta}\log N}{N^{2M}}
		=\frac{\log N}{N^{2M-Q\theta}}.
	\end{align*}
	Since $2M-Q\theta>1$, it follows that
	$$\sum_{N=1}^{\infty}\mu\left(x\in C:\left|\sum_{n=1}^{N^{Q}}f_{n}(2^{n}x)-\int \sum_{n=1}^{N^{Q}}f_{n}(2^ny)\, \mathrm{d}\mu(y)\right|\geq N^{M}\right)<\infty.$$
	Therefore by the  Borel-Cantelli lemma, for $\mu$-a.e. $x\in C$, we have 
	\begin{equation}
		\label{eventually}
		\left|\sum_{n=1}^{N^{Q}}f_{n}(2^{n}x)-\int \sum_{n=1}^{N^{Q}}f_{n}(2^ny)\, d\mu(y)\right|<N^{M}
	\end{equation} 
	for all large $N$. For $N\in\mathbb{N}$, we let $K_{N}\in\mathbb{N}$ such that
    \begin{equation}\label{defKN}
    K_{N}^{Q}\leq N< (K_{N}+1)^{Q}. 
    \end{equation}
	  Notice that
	\begin{equation}
		\label{Kasymptotics}\lim_{K\to\infty}\frac{\sum_{n=1}^{(K+1)^{Q}}n^{-\tau}}{\sum_{n=1}^{K^{Q}}n^{-\tau}}=1\quad {\rm and }\quad \sum_{n=1}^{K^{Q}}n^{-\tau}\asymp K^{(1-\tau)Q}.
	\end{equation}

	Now let $x$ belong to the full $\mu$ measure set for which \eqref{eventually} holds for all sufficiently large $N$. Then we have
	\begin{align*}
		&\limsup_{N\to\infty}\frac{\#\{1\leq n\leq N:\|2^{n}x-x_n\|< n^{-\tau}\}}{2\sum_{n=1}^{N}n^{-\tau}}\\
		\stackrel{\eqref{Counting upper bound}}{\leq} & \limsup_{N\to\infty}\frac{\sum_{n=1}^{N}f_{n}(2^nx)}{2\sum_{n=1}^{N}n^{-\tau}}\\
		\stackrel{\eqref{defKN}}\leq &  \limsup_{N\to\infty}\frac{\sum_{n=1}^{(K_{N}+1)^{Q}}f_{n}(2^nx)}{2\sum_{n=1}^{K_{N}^{Q}}n^{-\tau}}\\
		\stackrel{\eqref{eventually}}{\leq}& \limsup_{N\to\infty}\frac{\int \sum_{n=1}^{(K_{N}+1)^{Q}}f_{n}(2^ny)\, \mathrm{d}\mu(y)+(K_{N}+1)^{M}}{2\sum_{n=1}^{K_{N}^{Q}}n^{-\tau}}\\
		\stackrel{\text{Lemma }\ref{Lemma1}}{=}&\limsup_{N\to\infty}\frac{\sum_{n=1}^{(K_{N}+1)^{Q}}c_{0,n}+(K_{N}+1)^{M}+O\left(K_{N}^{(1-\frac{\log2}{3\log3})Q}\log K_{N}\right)}{2\sum_{n=1}^{K_{N}^{Q}}n^{-\tau}}\\
		\stackrel{\eqref{abequation}}{\leq}& \limsup_{N\to\infty}\frac{2(1+\epsilon)\sum_{n=1}^{(K_N+1)^{Q}}n^{-\tau}+(K_{N}+1)^{M}+O\left(K_{N}^{(1-\frac{\log2}{3\log3})Q}\log K_{N}\right)}{2\sum_{n=1}^{K_{N}^{Q}}n^{-\tau}}\\
		\stackrel{\eqref{Kasymptotics}}{=}&1+\epsilon.
	\end{align*}
	 It follows that for $\mu$-a.e. $x$, we have  
	$$\limsup_{N\to\infty}\frac{\#\{1\leq n\leq N:\|2^{n}x-x_n\|<n^{-\tau}\}}{2\sum_{n=1}^{N}n^{-\tau}}\leq 1+\epsilon.$$ Since $\epsilon$ is arbitrary, we conclude that for $\mu$-a.e. $x$,
	\begin{equation}
		\label{limsup}
		\limsup_{N\to\infty}\frac{\#\{1\leq n\leq N:\|2^{n}x-x_n\|<n^{-\tau}\}}{2\sum_{n=1}^{N}n^{-\tau}}\leq 1.
	\end{equation}

	By an analogous argument, this time taking a sequence of $C^{2}$ functions $(f_n)_{n=1}^{\infty}$ satisfying properties $(1)-(6)$ for $c_{1}=1-\epsilon$, $c_{2}=1$ and $(x_{n})_{n=1}^{\infty}$, we can show that for $\mu$-a.e. $x$, 
$$\liminf_{N\to\infty}\frac{\#\{1\leq n\leq N:\| 2^{n}x-x_n\|<n^{-\tau}\}}{2\sum_{n=1}^{N}n^{-\tau}}\geq 1-\epsilon.$$ 
Again by the arbitraness of  $\epsilon$, we have that for $\mu$-a.e. $x$,
\begin{equation*}
\liminf_{N\to\infty}\frac{\#\{1\leq n\leq N:\|2^{n}x-x_n\|< n^{-\tau}\}}{2\sum_{n=1}^{N}n^{-\tau}}\geq 1.
\end{equation*}
This combining with \eqref{limsup} completes the proof. 
\end{proof}

\section{Proof of Theorem~\ref{thmconverge}}\label{sec:thmconverge}

Recall that $\mu$  is the Cantor-Lebesgue measure and $\gamma=\frac{\log 2}{\log 3}$. 

We prove Theorem \ref{thmconverge} by combining Lemma~\ref{LemL2-dyadic} with a modification of the method from \cite[Theorem 2]{ABCY23A}.
\begin{proof}[Proof of Theorem~\ref{thmconverge}]
Recall that our aim is to show that for any  $\tau>\frac{1}{\gamma}-\frac{1-\gamma}{3-\gamma}$, \(\mu (W_{2}(\psi_{\tau},y))= 0\). Fix such a  $\tau$. For \(n\in  \mathbb{N}\), put
	\[
	\sigma_{n} = n^{-\tau},\qquad \delta_{n} = n^{-\varrho},
	\]
	where $0<\varrho<\min(\tau,\frac{\gamma}{2})$ is to be determined. Let \(N\in \mathbb{N}\) be large such that $N^{\tau-\varrho}\geq150$.

	Write \(G_{N}\) for the set of integers \(n\in [N,2N]\) such that
	\[
	\sum_{1\leq |h|\leq 2/\delta_{2N}}|\widehat{\mu} (h2^{n})|\leq1
	\]
	and let \(B_{N}\) be its complement in \([N,2N]\cap \mathbb{Z}\). Then we have \begin{equation}\label{eqBnC}
		\sum_{n\in B_{N}}\sum_{1\leq |h|\leq 2/\delta_{2N}}|\widehat{\mu} (h2^{n})|\geq\sum_{n\in B_{N}}1=\#B_{N}.
	\end{equation}
On the other hand, it follows from Cauchy-Schwarz inequality and Lemma~\ref{LemL2-dyadic} that
	\begin{align*}
		\sum_{n\in B_{N}}\sum_{1\leq |h|\leq 2/\delta_{2N}}|\widehat{\mu} (h2^{n})|&=\sum_{1\leq |h|\leq 2/\delta_{2N}}\sum_{n\in B_{N}}|\widehat{\mu}(h2^{n})|\\
        &\leq\sum_{1\leq |h|\leq 2/\delta_{2N}}(\#B_{N})^{\frac{1}{2}}\left(\sum_{n\in B_{N}}|\widehat{\mu}(h2^{n})|^{2}\right)^{\frac{1}{2}} \\
		&\leq\sum_{1\leq |h|\leq 2/\delta_{2N}}(\#B_{N})^{\frac{1}{2}}\left(\sum_{n=N}^{2N}|\widehat{\mu}(h2^{n})|^{2}\right)^{\frac{1}{2}} \\
&\ll(\#B_{N})^{\frac{1}{2}}\sum_{1\leq |h|\leq 2/\delta_{2N}}N^{\frac{1-\gamma}{2}} \\ &\ll(\#B_{N})^{\frac{1}{2}}N^{\varrho+\frac{1-\gamma}{2}}.
	\end{align*}
	This together with \eqref{eqBnC} gives that
	$$ \#B_{N}\ll N^{2\varrho+1-\gamma}.$$

For \(n\in \mathbb{N}\) and \(\sigma >0\), denote
	\[
	A_{n}^{y}(\sigma) = \{x\in [0,1]:\| 2^{n}x - y\| < \sigma\}.
	\]
Then 
	\[
	W_{2}(\psi_{\tau},y) = \limsup_{n\to\infty} A_{n}^{y}(\sigma_{n}).
	\]
Hence by the Borel-Cantelli lemma, to prove \(\mu (W_{2}(\psi_{\tau},y))= 0\), it suffices to show that
	\[
	\sum_{n = 1}^{\infty}\mu (A_{n}^{y}(\sigma_{n}))<+\infty.
	\]
	
We need the following estimate  from \cite[Section 2.1]{ACY24}:
\begin{equation}\label{Ahlforsregular}
\mu (A_{n}^{y}(\sigma_{n}))\ll \sigma_{n}^{\gamma}  \qquad (n\in \N).
\end{equation}
Moreover, by \cite[Theorem 4.1]{Yu21R}, we have
\begin{equation}\label{eqYuEst}
\mu (A_{n}^{y}(\delta_{n}))\ll \delta_{n}\left(1 + \sum_{1\leqslant |h|\leqslant 2 / \delta_{n}}|\widehat{\mu} (h2^{n})|\right)\qquad (n\in \mathbb{N}).
\end{equation}
Since for  $n\in G_N$,
	\[\sum_{1\leq |h|\leq 2/\delta_n}|\widehat{\mu} (h2^{n})|\leq\sum_{1\leq |h|\leq 2/\delta_{2N}}|\widehat{\mu} (h2^{n})|\leq1, \]
we see from \eqref{eqYuEst} that 
\begin{equation}\label{Good}
		\mu (A_{n}^{y}(\delta_{n}))\ll \delta_n=n^{-\varrho}, \qquad \forall n\in G_N.
	\end{equation}
It was also proved in \cite{ABCY23A} that for all $n\in[N,2N]\cap\mathbb{Z}$, 
\begin{equation}\label{eqsigmadeltan}
\mu (A_{n}^{y}(\sigma_{n}))\ll
	\frac{(\sigma_{n} / 2^{n})^{\gamma}}{(\delta_{n} / 2^{n})^{\gamma}}\,
	\mu (A_{n}^{y}(\delta_{n}))=\sigma_{n}^{\gamma}\delta_{n}^{- \gamma}\mu (A_{n}^{y}(\delta_{n}))=n^{-\gamma(\tau-\varrho)}\mu (A_{n}^{y}(\delta_{n})).
\end{equation}

Now, combing the above estimates, we have 
\[
	\begin{aligned}
		\sum_{n = N}^{2N}\mu (A_{n}^{y}(\sigma_{n}))
		=&\sum_{n\in G_N}\mu (A_{n}^{y}(\sigma_{n}))+\sum_{n\in B_N}\mu (A_{n}^{y}(\sigma_{n})) \\
\stackrel{\eqref{Ahlforsregular}}{\ll}&\sum_{n\in G_N}\mu (A_{n}^{y}(\sigma_{n}))+\sum_{n\in B_N}\sigma_n^{\gamma}\\
\stackrel{\eqref{eqsigmadeltan}}{\ll} &\sum_{n\in G_N}n^{-\gamma(\tau-\varrho)}\mu (A_{n}^{y}(\delta_{n}))
		+ \sum_{n\in B_N}\sigma_{N}^{\gamma}\\
\stackrel{\eqref{Good}}{\ll} & \sum_{n = N}^{2N}n^{-\gamma(\tau-\varrho)-\varrho}
		+ N^{2\varrho+1-\gamma-\tau\gamma}\\
		\ll &\  N^{1-\gamma(\tau-\varrho)-  \varrho}+N^{2\varrho+1-\gamma-\tau\gamma}.
	\end{aligned}
	\]
	Write
	\[
	\sum_{n = 1}^{\infty}\mu (A_{n}^{y}(\sigma_{n}))
	\leqslant \sum_{k = 0}^{\infty}\sum_{n = 2^{k}}^{2^{k + 1}}\mu (A_{n}^{y}(\sigma_{n}))\ll\sum_{k = 0}^{\infty}(2^{k(1-\gamma(\tau-\varrho)- \varrho)}+2^{k(2\varrho+1-\gamma-\tau\gamma)}),
	\]
	which converges when
	\[1-\gamma(\tau-\varrho)-\varrho<0\quad \text{ and } \quad  2\varrho+1-\gamma-\tau\gamma<0.\]
	Equivalently,
	\[\tau>\max\left\{\frac{1-(1-\gamma)\varrho}{\gamma}, \frac{2\varrho+1-\gamma}{\gamma}\right\}.\]
The minimum of the right-hand side is attained when
	\[\frac{1-(1-\gamma)\varrho}{\gamma}=\frac{2\varrho+1-\gamma}{\gamma},\]
	which gives
	\[\varrho =\frac{\gamma}{3-\gamma} \quad \text{ and } \quad \frac{2\varrho+1-\gamma}{\gamma}=\frac{1}{\gamma}-\frac{1-\gamma}{3-\gamma}.\]
	Note that $\varrho<\frac{\gamma}{2}$ and 
$\varrho<\frac{2\varrho+1-\gamma}{\gamma}$. Therefore, letting $\varrho =\frac{\gamma}{3-\gamma}$ we see from the above proof that  $\mu(W_{2}(\psi_{\tau},y))=0$ whenever 
\[\tau>\frac{1}{\gamma}-\frac{1-\gamma}{3-\gamma}\approx 1.4292.\]
This completes the proof.
\end{proof}

\section{Key Lemmas for Theorem~\ref{thmsselfsimilar}}\label{Sec:KL}
The general strategy to prove Theorem \ref{thmsselfsimilar} is similar to that of Theorem~\ref{Mainthm}. However, some modifications of key lemmas are necessary, which we detail in this section.

Recall that $\mu_{\mathbf{p}}$ is the self-similar measure on $K_{b,D}$ satisfying 
\[\mu_{\mathbf{p}}=\sum_{j=1}^{\#D}p_j\mu_{\mathbf{p}}\circ\phi_j^{-1},\]
where 
\[\phi_j(x)=\frac{x+k_j}{b}, \quad D=\{k_1,k_2,\ldots, k_{\#D}\}\ {\rm and}\  k_1<k_2<\cdots<k_{\#D}.\]
It is well-known that (see \cite[page 342]{Strichartz1993})
\begin{equation*}
\widehat\mu_{\mathbf{p}}(x)
	= \prod_{q=1}^{\infty}\sum_{j=1}^{\#D}p_{j}e\left(-\frac{k_{j}x}{b^{q}}\right),
	\qquad
|\widehat\mu_{\mathbf{p}}(x)|
	= \prod_{q=1}^{\infty}\left|\sum_{j=1}^{\#D}p_{j}e\left(-\frac{k_{j}x}{b^{q}}\right)\right|.
\end{equation*}
Similar to Theorem~\ref{Mainthm}, 
the key to prove Theorem \ref{thmsselfsimilar} is to estimate the following weighted sum
\begin{equation*}
\sum_{n=1}^{N}n^{-\sigma}|\widehat{\mu}_{\mathbf{p}}(ht^{n})|,
\end{equation*}
where  $\sigma>0$, $h\in\mathbb{Z}\setminus\{0\}$, and $t\geq2$ is an integer coprime to $b$; see Lemma~\ref{Leweightedsum}. 

We first give an elementary lemma, which is an extension of Lemma \ref{lemaverageCOS2}.
\begin{lem}\label{Keyel}
	Suppose $b$ is a prime number, $\ell\in\mathbb{N}$ and $u\in\mathbb{Z}$. Then for every $r\in\mathbb{N}$,
	\begin{equation}\label{I1}
		\frac{1}{b^{r}}\sum_{y=b^{\ell}s+u,\, 0\leq s<b^{r}}\prod_{q=\ell+1}^{\ell+r}\left|\sum_{j=1}^{\#D}p_{j}e\left(-\frac{k_{j}y}{b^{q}}\right)\right|^{2}\leq\left(\min_{1\leq j_{1}<j_{2}\leq\#D}(1-2p_{j_{1}}p_{j_{2}})\right)^{r}.
	\end{equation}
	In particular, if $\#D=2$, then
\begin{equation}\label{E1}
		\frac{1}{b^{r}}\sum_{y=b^{\ell}s+u,\, 0\leq s<b^{r}}\prod_{q=\ell+1}^{\ell+r}\left|\sum_{j=1}^{2}p_{j}e\left(-\frac{k_{j}y}{b^{q}}\right)\right|^{2}=(1-2p_{1}p_{2})^{r}.
	\end{equation}
\end{lem}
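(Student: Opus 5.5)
The plan is to follow the proof of Lemma~\ref{lemaverageCOS2}: expand each squared modulus as a trigonometric polynomial, average over $s$, and show that only the constant terms survive. This gives the exact identity
\[
\frac{1}{b^{r}}\sum_{0\leq s<b^{r}}\prod_{q=\ell+1}^{\ell+r}\Big|\sum_{j=1}^{\#D}p_{j}e\Big(-\frac{k_{j}y}{b^{q}}\Big)\Big|^{2}=\Big(\sum_{j=1}^{\#D}p_j^2\Big)^{r},\qquad y=b^\ell s+u,
\]
after which both \eqref{I1} and \eqref{E1} follow from an elementary inequality between $\sum_j p_j^2$ and $1-2p_{j_1}p_{j_2}$.

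First, for each $q$, write
\[
\Big|\sum_{j}p_{j}e\Big(-\frac{k_{j}y}{b^{q}}\Big)\Big|^{2}=\sum_{j,j'}p_jp_{j'}\,e\Big(-\frac{(k_j-k_{j'})y}{b^q}\Big).
\]
Multiplying over $q=\ell+1,\dots,\ell+r$ gives a sum over choices $(j_q,j'_q)_q$. Put $d_q=k_{j_q}-k_{j'_q}$; since $D\subseteq\{0,\dots,b-1\}$, we have $|d_q|\le b-1$. A typical term is $\prod_q p_{j_q}p_{j'_q}$ times $e\big(-y\sum_q d_q b^{-q}\big)$.

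Next, substitute $y=b^\ell s+u$. The $u$-part is a unimodular constant, and the $s$-part is $e(-sB/b^r)$ with
\[
B=\sum_{q=\ell+1}^{\ell+r}d_q\,b^{\ell+r-q}.
\]
Averaging over $0\le s<b^r$ gives $1$ if $b^r\mid B$ and $0$ otherwise. Since
\[
|B|\le (b-1)(1+b+\cdots+b^{r-1})=b^r-1,
\]
divisibility forces $B=0$. Reducing $B$ modulo $b$ gives $d_{\ell+r}\equiv0 \pmod b$, so $d_{\ell+r}=0$ because $|d_{\ell+r}|<b$. Dividing by $b$ and iterating shows that every $d_q=0$.

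Because the $k_j$ are distinct, $d_q=0$ means $j_q=j'_q$. So the surviving terms contribute exactly $\prod_q\sum_j p_j^2=(\sum_j p_j^2)^r$, which is the identity above. Primality of $b$ is not actually needed for this step.

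Finally, for any $j_1<j_2$,
\[
1=\Big(\sum_j p_j\Big)^2=\sum_j p_j^2+\sum_{j\ne j'}p_jp_{j'}\ge \sum_j p_j^2+2p_{j_1}p_{j_2}.
\]
Hence $\sum_j p_j^2\le \min_{j_1<j_2}(1-2p_{j_1}p_{j_2})$, which gives \eqref{I1}. When $\#D=2$ the inequality is an equality, $p_1^2+p_2^2=1-2p_1p_2$, which gives \eqref{E1}.

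The only step needing care is the vanishing argument for $B$. It needs the bound $|d_q|\le b-1$ to get $|B|<b^r$, and the base-$b$ digit extraction to conclude that all $d_q$ vanish. This mirrors the argument in Lemma~\ref{lemaverageCOS2} with $3$ replaced by $b$.
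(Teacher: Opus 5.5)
Your proof is correct, and it takes a genuinely different route from the paper's.

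The paper fixes a pair $j_1<j_2$ and first majorizes each factor pointwise, via the triangle inequality, by the two-frequency polynomial $1-2p_{j_1}p_{j_2}+p_{j_1}p_{j_2}\bigl(e(x_q)+e(-x_q)\bigr)$, where $x_q=(k_{j_2}-k_{j_1})y/b^q$. This turns the problem into a copy of the $\cos^2$ computation of Lemma~\ref{lemaverageCOS2}. The paper then evaluates the average of this majorant exactly, using primality of $b$ so that $k_{j_2}-k_{j_1}$ is invertible modulo $b^r$, and finally minimizes over pairs.

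You instead expand the squared modulus exactly, so nothing is lost. Your digit-extraction argument needs only $|k_j-k_{j'}|\le b-1$ and distinctness of the $k_j$, and it gives the exact identity with $\bigl(\sum_j p_j^2\bigr)^r$. This buys three things:
\begin{itemize}
\item Since $\sum_j p_j^2=1-2\sum_{j<j'}p_jp_{j'}$ and all $p_j>0$, your bound is strictly better than $\min_{j_1<j_2}(1-2p_{j_1}p_{j_2})$ as soon as $\#D\ge 3$.
\item Primality of $b$ is not needed for this lemma. It is still needed for Lemma~\ref{lemodblr}.
\item \eqref{E1} drops out as the equality case, with no separate check.
\end{itemize}

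Fed into Lemma~\ref{L2}, your identity would replace $\alpha$ by $1+\log\bigl(\sum_j p_j^2\bigr)/\log b$. This is $1$ minus the correlation dimension of $\mu_{\mathbf p}$, matching $1-\gamma$ for the Cantor measure. The same argument would then enlarge the threshold in Theorem~\ref{thmsselfsimilar} to $-\log\bigl(\sum_j p_j^2\bigr)/(12\log b)$.

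The only advantage of the paper's majorization is that it lets the two-digit Cantor computation be reused verbatim.
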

\begin{proof} 
Note that to prove \eqref{I1}, it is equivalent to show that  for every pair $(j_{1},j_{2})$ with $1\leq j_{1}<j_{2}\leq\#D$,
	$$\frac{1}{b^{r}}\sum_{y=b^{\ell}s+u, 0\leq s\leq b^{r}-1}\prod_{q=\ell+1}^{\ell+r}\left|\sum_{j=1}^{\#D}p_{j}e\left(-\frac{k_{j}y}{b^{q}}\right)\right|^{2}\leq\left(1-2p_{j_{1}}p_{j_{2}}\right)^{r}.$$
	Fix $(j_{1},j_{2})$ satisfying $1\leq j_{1}<j_{2}\leq\#D$. Since
	\begin{equation*}
    \begin{aligned}
&\quad\left|\sum_{j=1}^{\#D}p_{j}e\left(-\frac{k_{j}y}{b^{q}}\right)\right| \\
&=\left|p_{j_{1}}e\left(-\frac{k_{j_{1}}y}{b^{q}}\right)+p_{j_{2}}e\left(-\frac{k_{j_{2}}y}{b^{q}}\right)
		+\sum_{1\leq j\leq \#D,j\notin\{j_{1},j_{2}\}}p_{j}e\left(-\frac{k_{j}y}{b^{q}}\right)\right|,
    \end{aligned}
	\end{equation*}
	it follows from the triangle inequality that
	\begin{equation*}
		\begin{aligned}
			\left|\sum_{j=1}^{\#D}p_{j}e\left(-\frac{k_{j}y}{b^{q}}\right)\right|&\leq\left|p_{j_{1}}e\left(-\frac{k_{j_{1}}y}{vb^{q}}\right)+p_{j_{2}}e\left(-\frac{k_{j_{2}}y}{b^{q}}\right)\right|
			+\left|\sum_{1\leq j\leq\#D,j\notin\{j_{1},j_{2}\}}p_{j}e\left(-\frac{k_{j}y}{b^{q}}\right)\right| \\
			&\leq\left|p_{j_{1}}+p_{j_{2}}e\left(-\frac{(k_{j_{2}}-k_{j_{1}})y}{b^{q}}\right)\right|+1-p_{j_{1}}-p_{j_{2}} \\
			&=\sqrt{(p_{j_{1}}-p_{j_{2}})^{2}+4p_{j_{1}}p_{j_{2}}\cos^{2}\left(\frac{\pi(k_{j_{2}}-k_{j_{1}})y}{b^{q}}\right)}+1-p_{j_{1}}-p_{j_{2}}.
		\end{aligned}
	\end{equation*}
	Hence,
	\begin{equation*}
		\begin{aligned}
			&\quad\left|\sum_{j=1}^{\#D}p_{j}e\left(-\frac{k_{j}y}{b^{q}}\right)\right|^{2} \\
			&\leq(p_{j_{1}}-p_{j_{2}})^{2}+4p_{j_{1}}p_{j_{2}}\cos^{2}\left(\frac{\pi(k_{j_{2}}-k_{j_{1}})y}{b^{q}}\right)+(1-p_{j_{1}}-p_{j_{2}})^{2} \\
			&\quad +2(1-p_{j_{1}}-p_{j_{2}})\sqrt{(p_{j_{1}}-p_{j_{2}})^{2}+4p_{j_{1}}p_{j_{2}}\cos^{2}\left(\frac{\pi(k_{j_{2}}-k_{j_{1}})y}{b^{q}}\right)} \\
			&\leq(p_{j_{1}}-p_{j_{2}})^{2}+4p_{j_{1}}p_{j_{2}}\cos^{2}\left(\frac{\pi(k_{j_{2}}-k_{j_{1}})y}{b^{q}}\right)+(1-p_{j_{1}}-p_{j_{2}})^{2} \\
			&\quad +2(1-p_{j_{1}}-p_{j_{2}})(p_{j_{1}}+p_{j_{2}}) \\
			&=(p_{j_{1}}-p_{j_{2}})^{2}+(1-p_{j_{1}}-p_{j_{2}})(1+p_{j_{1}}+p_{j_{2}})+4p_{j_{1}}p_{j_{2}}\cos^{2}\left(\frac{\pi(k_{j_{2}}-k_{j_{1}})y}{b^{q}}\right)\\
        &=1-4p_{j_{1}}p_{j_{2}}+4p_{j_{1}}p_{j_{2}}\cos^{2}\left(\frac{\pi(k_{j_{2}}-k_{j_{1}})y}{b^{q}}\right).
		\end{aligned}
	\end{equation*}
Using
	$$\cos^{2}(\pi x)=\frac{1}{2}+\frac{1}{4}e(x)+\frac{1}{4}e(-x),\quad \forall x\in\mathbb{R},$$
	  we obtain that
\begin{equation*}
		\left|\sum_{j=1}^{\#D}p_{j}e\left(-\frac{k_{j}y}{b^{q}}\right)\right|^{2}\leq1-2p_{j_{1}}p_{j_{2}}+p_{j_{1}}p_{j_{2}}e\left(\frac{(k_{j_{2}}-k_{j_{1}})y}{b^{q}}\right)
		+p_{j_{1}}p_{j_{2}}e\left(\frac{(k_{j_{2}}-k_{j_{1}})y}{b^{q}}\right).
	\end{equation*}
It follows that
\begin{equation}\label{I3}
		\begin{aligned}
			&\quad\prod_{q=\ell+1}^{\ell+r}\left|\sum_{j=1}^{\#D}p_{j}e\left(-\frac{k_{j}y}{b^{q}}\right)\right|^{2} \\
			&\leq\prod_{q=\ell+1}^{\ell+r}\left(1-2p_{j_{1}}p_{j_{2}}+p_{j_{1}}p_{j_{2}}e\left(\frac{(k_{j_{2}}-k_{j_{1}})y}{b^{q}}\right)
			+p_{j_{1}}p_{j_{2}}e\left(-\frac{(k_{j_{2}}-k_{j_{1}})y}{b^{q}}\right)\right).
		\end{aligned}
	\end{equation}
	Denote
	$$c_{0}=1-2p_{j_{1}}p_{j_{2}},\quad c_{-1}=c_{1}=p_{j_{1}}p_{j_{2}}.$$
Expanding the right side of \eqref{I3}, a typical term is of the form
$$c_{\boldsymbol{\varepsilon}}e\left((k_{j_{2}}-k_{j_{1}})y \sum_{q=\ell+1}^{\ell+r}\frac{\varepsilon_{q}}{b^{q}}\right),$$
	where
$$\boldsymbol{\varepsilon}=(\varepsilon_{\ell+1},\varepsilon_{\ell+2},\ldots,\varepsilon_{\ell+r})\in\{-1,0,1\}^{r}\quad {\rm and}\quad c_{\boldsymbol{\varepsilon}}=\prod_{q=\ell+1}^{\ell+r}c_{\varepsilon_{q}}.$$
Hence the right side of \eqref{I3} is equal to
\begin{equation*}
\sum_{\boldsymbol{\varepsilon}\in\{-1,0,1\}^{r}}c_{\boldsymbol{\varepsilon}}e\left( (k_{j_{2}}-k_{j_{1}})y \sum_{q=\ell+1}^{\ell+r}\frac{\varepsilon_{q}}{b^{q}}\right).
	\end{equation*}
	Therefore,
	\begin{equation*}
		\begin{aligned}
			&\quad\sum_{y=b^{\ell}s+u,\, 0\leq s< b^{r}}\prod_{q=\ell+1}^{\ell+r} \left(1-2p_{j_{1}}p_{j_{2}}+p_{j_{1}}p_{j_{2}}e\left(\frac{(k_{j_{2}}-k_{j_{1}})y}{b^{q}}\right)
			+p_{j_{1}}p_{j_{2}}e\left(-\frac{(k_{j_{2}}-k_{j_{1}})y}{b^{q}}\right)\right) \\
&=\sum_{y=b^{\ell}s+u,\, 0\leq s<b^{r}}\sum_{\boldsymbol{\varepsilon}\in\{-1,0,1\}^{r}}c_{\boldsymbol{\varepsilon}}e\left( (k_{j_{2}}-k_{j_{1}})y \sum_{q=\ell+1}^{\ell+r}\frac{\varepsilon_{q}}{b^{q}}\right)\\
&=\sum_{\boldsymbol{\varepsilon}\in\{-1,0,1\}^{r}}\sum_{y=b^{\ell}s+u,\, 0\leq s< b^{r}}c_{\boldsymbol{\varepsilon}}e\left( (k_{j_{2}}-k_{j_{1}})y \sum_{q=\ell+1}^{\ell+r}\frac{\varepsilon_{q}}{b^{q}}\right) \\
			&=\sum_{\boldsymbol{\varepsilon}\in\{-1,0,1\}^{r}}\left(c_{\boldsymbol{\varepsilon}}e\left( (k_{j_{2}}-k_{j_{1}})u \sum_{q=\ell+1}^{\ell+r}\frac{\varepsilon_{q}}{b^{q}}\right)
			\sum_{s=0}^{b^{r}-1}e\left((k_{j_{2}}-k_{j_{1}})s\frac{\sum_{q=\ell+1}^{\ell+r}\varepsilon_{q}b^{r+\ell-q}}{b^{r}}\right)\right).
		\end{aligned}
	\end{equation*}
	Since $b$ is a prime number and $\boldsymbol{\varepsilon}\in\{-1,0,1\}^{r}$,  an argument analogous to the proof of Lemma \ref{lemaverageCOS2} yields that 
	\begin{equation*}
		\begin{aligned}
			\sum_{s=0}^{b^{r}-1}e\left((k_{j_{2}}-k_{j_{1}})s\frac{\sum_{q=\ell+1}^{\ell+r}\varepsilon_{q}b^{r+\ell-q}}{b^{r}}\right)=\begin{cases} b^{r},\ &{\rm if}\ \boldsymbol{\varepsilon}=\boldsymbol{0}, \\ 0,\ &{\rm if}\ \boldsymbol{\varepsilon}\in\{-1,0,1\}^{r}\setminus\{\boldsymbol{0}\}. \end{cases}
		\end{aligned}
	\end{equation*}
Consequently, we have 
\begin{equation}\label{E2}
		\begin{aligned}
			&\sum_{y=b^{\ell}s+u, 0\leq s\leq b^{r}-1}\prod_{q=\ell+1}^{\ell+r} \left(1-2p_{j_{1}}p_{j_{2}}+p_{j_{1}}p_{j_{2}}e\left(\frac{(k_{j_{2}}-k_{j_{1}})y}{b^{q}}\right)
			+p_{j_{1}}p_{j_{2}}e\left(-\frac{(k_{j_{2}}-k_{j_{1}})y}{b^{q}}\right)\right) \\
			&\quad=(1-2p_{j_{1}}p_{j_{2}})^{r}b^{r}.
		\end{aligned}
	\end{equation}
	This together with \eqref{I3} gives \eqref{I1}. Finally, we prove the ``In particular'' part. Note that if $\#D=2$, then 
	\begin{equation*}
		\left|\sum_{j=1}^{ 2}p_{j}e\left(-\frac{k_{j}y}{b^{q}}\right)\right|^{2}=1-2p_{ 1}p_{2}+p_{1}p_{2}e\left(\frac{(k_{2}-k_{1})y}{b^{q}}\right)
		+p_{1}p_{2}e\left(\frac{(k_{2}-k_{1})y}{b^{q}}\right).
	\end{equation*}
Combing this with  \eqref{E2} gives \eqref{E1}.
\end{proof}
Since $b\geq3$ is a prime number and $t\geq2$ is an integer satisfying that $b\nmid t$,   the following lemma follows from the identical proof of Lemma~\ref{lemmod3lr} and Corollary~\ref{corhbnmod}.
\begin{lem}\label{lemodblr}
Denote $\ell=\nu_b(t^{b-1}-1)$. Let $h\in\Z\setminus\{0\}$ and write $
	h=b^m u$, where $m\geq 0$ and $u\in\Z\setminus\{0\}$ with $ \gcd(u,b)=1$. Then, for every integer $r\ge 0$,
\begin{equation*} 
	\left\{ht^{(b-1)n} \bmod b^{m+\ell+r}: 0\leq n<b^r\}
	=
	\{(h+b^{m+\ell}k)\bmod b^{m+\ell+r}: 0\leq k<b^{r}\right\}.
\end{equation*}
\end{lem}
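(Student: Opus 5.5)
We prove Lemma~\ref{lemodblr} by the argument of Lemma~\ref{lemmod3lr} and Corollary~\ref{corhbnmod}, with $3$ replaced by $b$ and $4$ replaced by $c:=t^{b-1}$.

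\textbf{Step 1: setting up $c$.} Since $b$ is prime and $b\nmid t$, Fermat's little theorem gives $c\equiv 1\bmod b$. Hence $\ell=\nu_b(c-1)\geq 1$ is a well-defined finite integer, because $c\geq 2^{b-1}>1$.

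\textbf{Step 2: the order of $c$.} We show $\ord_{b^{\ell+r}}(c)=b^r$. The key input is the Lifting the Exponent identity $\nu_b(c^n-1)=\ell+\nu_b(n)$ for $n\geq1$. This holds since $b$ is an odd prime dividing $c-1$, which is the setting of \cite[Theorem 1.37]{Pongsriiam}.

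This identity is the only point needing care. If a citation is not desired, I would prove it directly. When $b\nmid n$, write $\frac{c^n-1}{c-1}=\sum_{i<n}c^i\equiv n\not\equiv0\bmod b$. For the step from $n$ to $bn$, write $c^n=1+b^{k}v$ with $k\geq1$ and $b\nmid v$, and expand
\[
c^{bn}=(1+b^k v)^b=1+b^{k+1}v+\binom b2 b^{2k}v^2+\cdots .
\]
All later terms are divisible by $b^{k+2}$. For $\binom b2 b^{2k}v^2$ this uses $b\mid\binom b2$ (as $b$ is odd) together with $2k+1\geq k+2$. The final term $b^{bk}v^b$ is also fine, since $bk\geq 3k\geq k+2$. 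Hence $\nu_b(c^{bn}-1)=k+1$, and induction gives the identity.

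From the identity, $c^{b^r}\equiv1\bmod b^{\ell+r}$. For $0<n<b^r$ we have $\nu_b(n)\leq r-1$, so $c^n\not\equiv1\bmod b^{\ell+r}$. Therefore $\ord_{b^{\ell+r}}(c)=b^r$.

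\textbf{Step 3: counting residues.} The residues $c^n$, $0\leq n<b^r$, are therefore pairwise distinct modulo $b^{\ell+r}$. Each is $\equiv 1\bmod b^\ell$, and there are exactly $b^r$ such classes modulo $b^{\ell+r}$. So the powers run exactly once through the classes $(1+b^\ell k)\bmod b^{\ell+r}$, $0\leq k<b^r$.

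\textbf{Step 4: multiplying by $h$.} Write $c^n\equiv 1+b^\ell t_n\bmod b^{\ell+r}$, where $(t_n)_{0\leq n<b^r}$ runs through all residues modulo $b^r$. Multiplying by $h=b^mu$ gives
\[
hc^n\equiv h+b^{m+\ell}ut_n \bmod b^{m+\ell+r}.
\]
Since $\gcd(u,b)=1$, multiplication by $u$ permutes the residues modulo $b^r$. Hence $ut_n$ also runs through all residues modulo $b^r$, which yields the claimed set equality.
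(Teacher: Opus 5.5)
Your proposal is correct and is essentially the paper's argument. The paper proves this lemma by saying it follows from the identical proofs of Lemma~\ref{lemmod3lr} and Corollary~\ref{corhbnmod}, with $3$ replaced by $b$ and $4$ by $t^{b-1}$: use Lifting the Exponent to get order $b^r$, count residues, then multiply by $u$. That is exactly your Steps~1--4, and your self-contained proof of the LTE identity is a correct addition.
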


Based on Lemmas \ref{Keyel} and \ref{lemodblr}, we can establish analogues to Lemmas~\ref{LemL2-dyadic} and \ref{lem:weightedL1}  as follows.

Let
\begin{equation}\label{beta}
\alpha:=1+\frac{\log\left(\min\limits_{1\leq j_{1}<j_{2}\leq\#D}(1-2p_{j_{1}}p_{j_{2}})\right)}{\log b}\quad \text{ and } \quad \beta:=\frac{1+\alpha}{2}. 
\end{equation}

\begin{lem}\label{L2}
Suppose $b\geq3$ is a prime number and $t\geq2$ is an integer satisfying that $b\nmid t$. Then there exists an absolute constant $c>0$, such that for every integer $a\geq0$, every positive integer $N$ and every non-zero integer $h$,
	$$\sum_{n=a}^{a+N-1}|\widehat{\mu}_{\mathbf{p}}(ht^{n})|^{2}\leq c^{2}N^{\alpha}.$$
	Consequently,
	$$\sum_{n=a}^{a+N-1}|\widehat{\mu}_{\mathbf{p}}(ht^{n})|\leq  cN^{\beta}.$$
	The constant $c$ is independent of $a$, $N$ and $h$.
\end{lem}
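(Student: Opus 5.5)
We mirror the proof of Lemma~\ref{LemL2-dyadic}, with $t^{b-1}$ playing the role of $4$ and Lemmas~\ref{lemodblr} and~\ref{Keyel} replacing Lemmas~\ref{lemmod3lr} and~\ref{lemaverageCOS2}. Put $\lambda:=\min_{1\leq j_1<j_2\leq \#D}(1-2p_{j_1}p_{j_2})$, so that $b\lambda=b^{\alpha}$, and write $\Phi(x):=\bigl|\sum_{j}p_j e(-k_jx)\bigr|$. Then
\[|\widehat\mu_{\mathbf p}(x)|=\prod_{q\geq 1}\Phi(x/b^q),\]
and every factor is at most $1$.

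\textbf{Block estimate.} Fix $h=b^m u$ with $\gcd(u,b)=1$, and let $\ell=\nu_b(t^{b-1}-1)$. This is finite because $t^{b-1}\equiv 1\bmod b$ by Fermat. Discard all factors except those with $q\in\{m+\ell+1,\ldots,m+\ell+r\}$. Since $\Phi$ is $1$-periodic, the remaining product
\[\prod_{q=m+\ell+1}^{m+\ell+r}\Phi\bigl(ht^{(b-1)n}/b^q\bigr)^2\]
depends only on $ht^{(b-1)n}\bmod b^{m+\ell+r}$. By Lemma~\ref{lemodblr}, as $0\leq n<b^r$ these residues are exactly the $y=h+b^{m+\ell}k$, $0\leq k<b^r$. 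Apply Lemma~\ref{Keyel} with $\ell$ replaced by $m+\ell$ and $u$ replaced by $h$; it allows an arbitrary integer $u$, so this is fine. This gives
\[\sum_{n=0}^{b^r-1}|\widehat\mu_{\mathbf p}(ht^{(b-1)n})|^2\leq b^r\lambda^r=b^{r\alpha},\]
uniformly in $h$.

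\textbf{Arbitrary windows.} Given $a$ and $N$, split $n\in[a,a+N)$ by its residue class $i\bmod (b-1)$, writing $n=(b-1)n'+i$. Each class contributes a sum of the form $\sum_{n'}|\widehat\mu_{\mathbf p}(h t^{i}t^{(b-1)a'} t^{(b-1)n'})|^2$, where $n'$ ranges over at most $\lceil N/(b-1)\rceil\leq N$ consecutive integers starting at some $a'$. Choose $R$ with $b^R\leq N<b^{R+1}$ and extend each range to $b^{R+1}$ consecutive terms. The block estimate, applied to the nonzero integer $ht^it^{(b-1)a'}$, bounds each class by
\[b^{(R+1)\alpha}\leq b^{\alpha}N^{\alpha}.\]
Summing over the $b-1$ classes gives
\[\sum_{n=a}^{a+N-1}|\widehat\mu_{\mathbf p}(ht^n)|^2\leq (b-1)b^{\alpha}N^{\alpha}.\]
So $c^2=(b-1)b^{\alpha}$ works. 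This constant depends only on $b$, $D$ and $\mathbf p$, not on $a$, $N$ or $h$.

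\textbf{The $L^1$ bound.} Cauchy--Schwarz gives
\[\sum_{n=a}^{a+N-1}|\widehat\mu_{\mathbf p}(ht^n)|\leq N^{1/2}\,cN^{\alpha/2}=cN^{\beta}.\]

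The main point needing care is the block step: the factor indices $q$ must line up with the modulus in Lemma~\ref{lemodblr}. Factors with $q\leq m+\ell$ are simply bounded by $1$ rather than shown to equal $1$, since $\Phi$ is not identically $1$ at integers in general. Lemma~\ref{Keyel} is stated exactly for the window $q\in(\ell',\ell'+r]$ over the arithmetic progression $y\equiv u \bmod b^{\ell'}$, so this alignment works.
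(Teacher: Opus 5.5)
Your proposal is correct and follows the argument the paper intends. The paper states Lemma~\ref{L2} without proof, as the analogue of Lemma~\ref{LemL2-dyadic} built from Lemmas~\ref{lemodblr} and~\ref{Keyel}, and your steps match it: a block bound over $b^r$ consecutive powers of $t^{b-1}$, splitting into residue classes mod $b-1$ and extending each to a full block, then Cauchy--Schwarz.

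Two cosmetic remarks. First, your closing justification is misstated: $\Phi(n)=|\sum_j p_j|=1$ for every integer $n$. The real reason to bound the discarded factors by $1$ is that for $m<q\le m+\ell$ the argument $ht^{(b-1)n}/b^q$ need not be an integer. Second, the step $b^{(R+1)\alpha}\le b^{\alpha}N^{\alpha}$ uses $\alpha>0$. This holds because $\lambda\ge 1/2$ and $b\ge 3$.
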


\begin{lem}\label{Leweightedsum}
	Suppose $b\geq3$ is a prime number and $t\geq2$ is an integer satisfying that $b\nmid t$. Let $\beta$ be defined in \eqref{beta} and 
    $\sigma\in(0,\beta)$. Then for every non-zero integer $h$,
	$$\sum_{n=1}^{N}n^{- \sigma}|\widehat{\mu}_{\mathbf{p}}(ht^{n})|\ll_{\sigma}N^{\beta-\sigma},$$
	where the implicit constant is independent of $N$ and $h$. 
\end{lem}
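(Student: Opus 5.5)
The statement to prove is Lemma~\ref{Leweightedsum}. I would copy the proof of Lemma~\ref{lem:weightedL1} essentially verbatim, feeding in Lemma~\ref{L2} in place of Corollary~\ref{cor:prefix}.

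Fix $h\in\Z\setminus\{0\}$ and, for $X\ge1$, set
\[
A_h(X):=\sum_{1\le n\le X}|\widehat\mu_{\mathbf p}(ht^n)|.
\]
Applying the $L^1$ part of Lemma~\ref{L2} with $a=1$ and $N=\lfloor X\rfloor$ gives $A_h(X)\le cX^{\beta}$ uniformly in $h$. By Abel's summation formula,
\[
\sum_{1\le n\le N}n^{-\sigma}|\widehat\mu_{\mathbf p}(ht^n)|=N^{-\sigma}A_h(N)+\sigma\int_1^N x^{-\sigma-1}A_h(x)\,\mathrm dx
\ll N^{\beta-\sigma}+\sigma\int_1^N x^{\beta-\sigma-1}\,\mathrm dx .
\]
Since $\sigma<\beta$, the exponent satisfies $\beta-\sigma-1>-1$. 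The integral is therefore at most $\frac{1}{\beta-\sigma}N^{\beta-\sigma}$, which yields the claim with an implicit constant depending only on $\sigma$ (and on $c$, which is absolute).

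All the substance sits in Lemma~\ref{L2}, which I take as given since it is stated earlier. If it had to be justified, the route would mirror the proof of Lemma~\ref{LemL2-dyadic}.

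\textbf{Reduction to a residue sum.} Write $h=b^mu$ with $b\nmid u$. The factors of $|\widehat\mu_{\mathbf p}(ht^{n})|^2$ with $q\le m$ are trivially bounded by $1$, as are those with $q>m+\ell+r$.

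\textbf{The sub-progression $n\equiv n_0\pmod{b-1}$.} By Lemma~\ref{lemodblr}, applied to $ht^{n_0}$ in place of $h$, the residues $ht^{n_0+(b-1)n}$, $0\le n<b^r$, run exactly once through the class $ht^{n_0}+b^{m+\ell}k \bmod b^{m+\ell+r}$, $0\le k<b^r$. Lemma~\ref{Keyel} then bounds the resulting average by $\left(\min(1-2p_{j_1}p_{j_2})\right)^{r}$. Here one uses that the block of $q$'s is shifted by $m$, and that $\ell$ levels lie between $m$ and the averaged block. Those $\ell$ unaveraged levels are bounded by $1$, at the cost of a constant $b^{\ell}$ depending only on $t$ and $b$.

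\textbf{Assembling the bound.} Summing over the $b-1$ residue classes $n_0$, and choosing $r$ with $b^r\asymp N$ (using monotonicity of the sum in the range), gives $\ll N\cdot N^{\alpha-1}=N^{\alpha}$. Cauchy--Schwarz then gives the $N^\beta$ bound.

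The only delicate point in the whole chain is exactly that bookkeeping in Lemma~\ref{L2}: the gap levels $q\in(m,m+\ell]$ are not averaged, and Lemma~\ref{Keyel} must be applied with the correct offset so the constant stays independent of $h$. For the present lemma itself, no obstacle remains beyond the integrability condition $\sigma<\beta$.
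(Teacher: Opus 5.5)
Your proof is correct and is essentially the paper's argument: Abel summation exactly as in the proof of Lemma~\ref{lem:weightedL1}, with Lemma~\ref{L2} supplying $A_h(X)\le cX^{\beta}$ uniformly in $h$, and $\sigma<\beta$ giving $\int_1^N x^{\beta-\sigma-1}\,\mathrm{d}x\le N^{\beta-\sigma}/(\beta-\sigma)$. Your optional sketch of Lemma~\ref{L2} also follows the paper's intended route, with one harmless overstatement: the $\ell$ unaveraged levels are simply bounded by $1$ and cost no factor $b^{\ell}$.
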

With Lemma \ref{Leweightedsum} in hand, we just need to repeat the proof of Theorem \ref{Mainthm} (in which we replace $\mu$ by $\mu_{\mathbf{p}}$ and $2$ by $t$) to establish Theorem \ref{thmsselfsimilar}, we omit the details.
 
{\noindent \bf  Acknowledgements}. The authors thank Professor Sanju Velani for constructive discussions, which are very helpful in improving the quality of the article. X.-R. Dai was supported by the National Key R\&D Program of China (No.2024YFA1013703), NSFC 12271534 and the Guangdong Province Key Laboratory of Computational Science at the Sun Yat-sen University. BL was supported by National Key R\&D Program of
China (No. 2024YFA1013700), NSFC 12271176 and Guangdong Natural Science Foundation 2024A1515010946. Y.-F. Wu was supported by the NSFC 12301110.


\begin{thebibliography}{100}

\bibitem{ABCY23A}
D. Allen, S. Baker, S. Chow, and H.  Yu.
A note on dyadic approximation in Cantor's set. Indag. Math. (N.S.) 34(1): 190–197, 2023.


\bibitem{ACY24} D. Allen, S. Chow, and H. Yu. Dyadic approximation in the middle-third Cantor set. Selecta Math. (N.S.) 29(1): Paper No. 11, 2023.

	
\bibitem{Baker25A}
S. Baker.
Approximating elements of the middle third Cantor set with dyadic rationals. 
Israel J. Math. 266(1):  285--305, 2025.

\bibitem{BHZ24K}
T. Bénard, W. K. He, and H. Zhang.
Khintchine dichotomy for self-similar measures. J. Amer. Math. Soc., 39(3): 587--623, 2026.



\bibitem{BD16M}
Y. Bugeaud and A. Durand.
Metric Diophantine approximation on the middle-third Cantor set.
{\em J. Eur. Math. Soc. (JEMS)} 18(6):1233--1272, 2016.


\bibitem{Cassels59}
J. W. S. Cassels.
On a problem of Steinhaus about normal numbers.
Colloq. Math. 7:95–101, 1959.

\bibitem{CLW26}
Y. Y. Chang, B. Li, and M. Wu.
On orbit complexity of dynamical systems: intermediate value property and level set related to a Furstenberg problem.
{\em Int. Math. Res. Not. IMRN}, 2026(11): rnag106, 2026.

\bibitem{CVY24C}
S. Chow, P. Varju, and  H. Yu.
Counting rationals and diophantine approximation in missing-digit Cantor sets. Adv. Math., 488, Paper No. 110807, 2026.

\bibitem{CU25S}
S. Chow and H. Yu. Simultaneous and multiplicative Diophantine approximation on missing-digit fractals. arXiv:2412.12070, 2024.


\bibitem{Falconer14}
K. J. Falconer. {\it Fractal geometry. Mathematical foundations and applications}. Third edition.
John Wiley \& Sons, Ltd., Chichester, 2014.

\bibitem{Furstenberg67}
H. Furstenberg.
Disjointness in ergodic theory, minimal sets, and a problem in Diophantine approximation.
{\em Math. Systems Theory}, 1:1--49,  1967.

\bibitem{HS15E}
M. Hochman and P. Shmerkin.
Equidistribution from fractal measures. 
{\em Invent. Math.}, 202(1):427--479, 2015.

\bibitem{Host95}
B. Host.
Nombres normaux, entropie, translations. 
{\em Israel J. Math.}, 91(1--3):419--428, 1995.

\bibitem{Jackson94T} D. Jackson. \textit{The theory of approximation,} American Mathematical Society Colloquium Publications, 11. American Mathematical Society, Providence, RI, 1994. viii+178 pp. ISBN: 0-8218-1011-1.


\bibitem{LSV07O}
J. Levesley, C. Salp, and S. L. Velani.
On a problem of K. Mahler: Diophantine approximation and Cantor sets. 
{\em Math. Ann.} 338(1):97--118, 2007.

\bibitem{LLW25Z}
B. Li, R. F. Li, and Y. F. Wu.
Zero-full law for well approximable sets in missing digit sets. 
{\em Math. Proc. Cambridge Philos. Soc.}, 178(1):81--102, 2025.

\bibitem{Mahler84}
K. Mahler. Some suggestions for further research. {\em  Bull. Austral. Math. Soc.}, 29(1):101--108, 1984.

\bibitem{Pongsriiam}
P. Pongsriiam.
{\em Analytic number theory for beginners.}
Second edition. Student Mathematical Library, 103. American Mathematical Society, Providence, RI, 2023.


\bibitem{Schleischitz21}
J. Schleischitz. On intrinsic and extrinsic rational approximation to Cantor sets. {\em Ergodic Theory Dynam. Systems}, 41(5):1560--1589, 2021.

\bibitem{Shmerkin19}
P. Shmerkin.
On Furstenberg's intersection conjecture, self-similar measures, and the $L^q$ norms of convolutions. 
{\em Ann. of Math. (2)},  189:319--391, 2019.

\bibitem{Strichartz1993}
R. S. Strichartz. Self-similar measures and their Fourier transforms, \Rmnum{2}, Trans. Amer.
Math. Soc. 336(1):335–361, 1993.

\bibitem{TWW24M}
B. Tan, B. W. Wang, and J. Wu.
Mahler's question for intrinsic Diophantine approximation on triadic Cantor set: the divergence theory. 
{\em Math. Z.}, 306(1), no.2, 2024.

\bibitem{Wu19A}
M. Wu.
A proof of Furstenberg's conjecture on the intersections of $\times p$- and $\times q$-invariant sets. 
{\em Ann. of Math. (2)},  189(3):707--751, 2019.

\bibitem{Yu21R} H. Yu. Rational points near self-similar sets. Preprint: arXiv:2101.05910, 2021.

\end{thebibliography}
\end{document}